\newtheorem{theorem}{Theorem}[section]
\newtheorem{conjecture}{Conjecture}
\newtheorem{lemma}[theorem]{Lemma}
\newtheorem{proposition}[theorem]{Proposition}
\newtheorem{remark}[theorem]{Remark}
\newcommand{\R}{\mathbb R}
\begin{document}
\title{A comparison between Neumann and Steklov eigenvalues}
\author[]{Antoine Henrot, Marco Michetti}

\address[Antoine Henrot]{Universit\'e de Lorraine, CNRS, IECL, F-54000 Nancy, France}
\email{antoine.henrot@univ-lorraine.fr}
\address[Marco Michetti]{
Universit\'e de Lorraine, CNRS, IECL, F-54000 Nancy France}
\email{marco.michetti@univ-lorraine.fr}

\date{\today}

\begin{abstract}
    This paper is devoted to a comparison between the normalized first (non-trivial) Neumann eigenvalue $|\Omega| \mu_1(\Omega)$
    for a Lipschitz open set $\Omega$ in the plane, and  the normalized first (non-trivial) Steklov eigenvalue $P(\Omega) \sigma_1(\Omega)$. 
    More precisely, we study the ratio $F(\Omega):=|\Omega| \mu_1(\Omega)/P(\Omega) \sigma_1(\Omega)$. We prove that this ratio can
    take arbitrarily small or large values if we do not put any restriction on the class of sets $\Omega$. Then we restrict ourselves to the class
    of plane convex domains for which we get explicit bounds. We also study the case of thin convex domains for which we give more precise
    bounds. The paper finishes with the plot of the corresponding Blaschke-Santal\'o diagrams $(x,y)=\left(|\Omega| \mu_1(\Omega), P(\Omega)
     \sigma_1(\Omega) \right)$.
\end{abstract}

\maketitle

\tableofcontents

\section{Introduction}

Let $\Omega\subset \mathbb{R}^2$ be an open Lipschitz set, the Steklov problem on $\Omega$ consists in solving the eigenvalue problem 
\begin{equation*}
\begin{cases}
     \Delta v=0\qquad  \Omega   \\
      \partial_{\nu} v=\sigma v \qquad \partial\Omega,
\end{cases}
\end{equation*}
where $\nu$ stands for the outward normal at the boundary.
As the trace operator $H^1(\Omega) \rightarrow L^2(\partial \Omega)$ is compact (when $\Omega$ is Lipschitz), the spectrum of the Steklov problem is discrete and the eigenvalues (counted with their multiplicities) go to infinity
$$0= \sigma_0(\Omega) \leq  \sigma_1(\Omega)\le  \sigma_2 (\Omega) \le \cdots \rightarrow +\infty.$$
We recall the classical variational characterization of the Steklov eigenvalues
\begin{equation}\label{eVFS}
\sigma_k(\Omega)=\sup_{E_k} \inf_{0\neq v\in E_k} \frac{\int_{\Omega}|\nabla v|^2dx}{\int_{\partial \Omega}v^2 ds},
\end{equation}
where the infimum is taken over all $k-$dimensional subspaces of the Sobolev space $H^1(\Omega)$ which are $L^2-$orthogonal to constants on $\partial \Omega$.

The Neumann eigenvalue problem on $\Omega$ consists in solving the eigenvalue problem 
\begin{equation*}
\begin{cases}
     -\Delta u=\mu u\qquad  &\Omega \\
      \partial_{\nu} u=0 \qquad &\partial\Omega.
\end{cases}
\end{equation*}
As the Sobolev embedding $H^1(\Omega) \rightarrow L^2(\Omega)$ is also compact here, the spectrum of the Neumann problem is discrete and the eigenvalues (counted with their multiplicities) go to infinity
$$0= \mu_0(\Omega) \leq   \mu_1(\Omega)\le  \mu_2 (\Omega) \le \cdots \rightarrow +\infty.$$
We also have a variational characterization of the Neumann eigenvalues
\begin{equation}\label{eVFN}
\mu_k(\Omega)=\sup_{E_k} \inf_{0\neq u\in E_k} \frac{\int_{\Omega}|\nabla u|^2dx}{\int_{ \Omega}u^2 dx},
\end{equation}
where the infimum is taken over all $k-$dimensional subspaces of the Sobolev space $H^1(\Omega)$ which are $L^2-$orthogonal to constants on $\Omega$.

Recently several papers study the link between theses two families of eigenvalues, let us mention for example 
 \cite{GHL20}, \cite{GKL20}, \cite{HS20}, \cite{LP15}.
A natural question is to compare the first (non-trivial) eigenvalues suitably normalized, that is to say to compare $|\Omega|\mu_1(\Omega)$ and
$P(\Omega)\sigma_1(\Omega)$
where $\Omega\subset \mathbb{R}^2$ is an open Lipschitz set in the plane, $|\Omega|$ is its Lebesgue measure, $P(\Omega)$ is its perimeter.
More precisely, in this paper we study the following spectral shape functional:
\begin{equation}\label{eF1}
F(\Omega)=\frac{\mu_1(\Omega)|\Omega|}{\sigma_1(\Omega)P(\Omega)}.
\end{equation}
We want to find bounds for $F(\Omega)$ (if possible optimal) in the two following cases: the set $\Omega\subset \R^2$ is just bounded and Lipschitz
or the set $\Omega\subset \R^2$ is bounded and convex.

We now present the main results and the structure of the paper. In Section \ref{sE} we will show that, if we do not put any restriction on the class of sets, the problem of maximization and minimization of $F(\Omega)$ is ill posed, indeed we have 
\begin{equation*}
\inf\{F(\Omega):\Omega\subset \mathbb{R}^2 \text{ bounded open set and Lipschitz} \}=0,
\end{equation*}
\begin{equation*}
\sup\{F(\Omega):\Omega\subset \mathbb{R}^2 \text{ bounded open set and Lipschitz} \}=+\infty.
\end{equation*}
Thus we will study the problem of minimizing or maximizing $F(\Omega)$ in the class of convex plane domains.  It is well known that minimizing (or maximizing)
sequences of plane convex domains 
\begin{itemize}
\item either converge (in the Hausdorff sense) to an open convex set and we will see that, in this case, this set will be the minimizer or maximizer,
\item or shrink to a segment which leads us to consider such particular sequences of convex domains.
\end{itemize} 

Therefore, in Section \ref{sCS} we will study the behaviour of the functional $F(\Omega_{\epsilon})$ where $\Omega_{\epsilon}$ is a special class of domains, called thin domains (see \eqref{eOme}). The main theorem of this section gives the precise asymptotic behaviour of the functional $F(\Omega_{\epsilon})$
\begin{theorem}\label{tAF}
Let $\Omega_{\epsilon}\subset \mathbb{R}^2$ be a sequence of thin domains that converges to a segment in the Hausdorff sense. Then there exists a non negative and concave function $h\in L^{\infty}(0,1)$ such that the following asymptotic behaviour holds:
\begin{equation*}
F(\Omega_{\epsilon})\xrightarrow[\epsilon \rightarrow 0]{} F(h):=\frac{\mu_1(h)\int_0^1h(x)dx}{\sigma_1(h)}.
\end{equation*}
Where $\mu_1(h)$ is the first non zero eigenvalue of
\begin{equation*}
\begin{cases}
\vspace{0.3cm}
   -\frac{d}{dx}\big(h(x)\frac{d u_k}{dx}(x)\big)=\mu_k(h) h(x)u_k(x)  \qquad  x\in \big(0,1\big) \\
      h(0)\frac{du_k}{dx}(0)=h(1)\frac{du_k}{dx}(1)=0,
\end{cases}
\end{equation*}
and $\sigma_1(h)$ is the first non zero eigenvalue of
\begin{equation*}
\begin{cases}
\vspace{0.3cm}
   -\frac{d}{dx}\big(h(x)\frac{d v_k}{dx}(x)\big)=\sigma_k(h) v_k(x)  \qquad  x\in \big(0,1\big) \\
      h(0)\frac{dv_k}{dx}(0)=h(1)\frac{dv_k}{dx}(1)=0.
\end{cases}
\end{equation*}
\end{theorem}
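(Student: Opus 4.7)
My approach is to perform a thin-domain asymptotic analysis for $\mu_1(\Omega_\epsilon)$ and for $\sigma_1(\Omega_\epsilon)$ separately, and then combine the two with the elementary limits $|\Omega_\epsilon|/\epsilon\to \int_0^1 h(x)\,dx$ and $P(\Omega_\epsilon)\to 2$, which follow from the thin-domain structure. The preliminary step is a vertical rescaling $(x,y)\mapsto (x,y/\epsilon)$ mapping each $\Omega_\epsilon$ onto a fixed reference region of the form $D=\{(x,y):0<x<1,\ 0<y<h(x)\}$ (or its two-sided analogue); on $D$ all bulk and boundary integrals acquire explicit $\epsilon$-dependent coefficients whose leading-order behaviour can be read off directly.

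For $\mu_1$, I plug the first $1$D Neumann eigenfunction $u_1$, trivially extended in $y$, into the Rayleigh quotient \eqref{eVFN}: this yields $\mu_1(\Omega_\epsilon)\le \mu_1(h)+o(1)$. Conversely, given a normalized minimizer $u_\epsilon$ of \eqref{eVFN}, a Poincar\'e inequality in the vertical direction with constant of order $\epsilon^2$ shows that $u_\epsilon$ and its vertical average $\bar u_\epsilon(x)$ differ by terms that are negligible in the Rayleigh quotient. Extracting a weakly convergent subsequence of $\bar u_\epsilon$ in the weighted space $H^1((0,1);h\,dx)$, the limit $u_0$ is admissible in the variational characterization of $\mu_1(h)$, giving $\liminf \mu_1(\Omega_\epsilon)\ge \mu_1(h)$. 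The Steklov analysis is parallel: using $v_1$ as test function one obtains $\sigma_1(\Omega_\epsilon)\le \tfrac{\epsilon}{2}\sigma_1(h)+o(\epsilon)$, and for a normalized minimizer $v_\epsilon$ the boundary integral $\int_{\partial\Omega_\epsilon} v_\epsilon^2\,ds$ splits as $2\int_0^1 \bar v_\epsilon^2\,dx + o(1)$ (the two long horizontal sides each contribute the same leading term, the short vertical ends are $O(\epsilon)$), while the bulk term behaves like $\epsilon\int_0^1 h(\bar v_\epsilon')^2\,dx$. Passing to the limit and invoking the variational characterization of $\sigma_1(h)$ gives $\liminf \tfrac{2}{\epsilon}\sigma_1(\Omega_\epsilon)\ge \sigma_1(h)$, hence $\sigma_1(\Omega_\epsilon)\sim \tfrac{\epsilon}{2}\sigma_1(h)$.

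Combining the four asymptotics, the numerator of $F(\Omega_\epsilon)$ is equivalent to $\mu_1(h)\,\epsilon\int_0^1 h$ and the denominator to $\tfrac{\epsilon}{2}\sigma_1(h)\cdot 2=\epsilon\,\sigma_1(h)$, so the factors of $2$ and $\epsilon$ cancel and $F(\Omega_\epsilon)\to \mu_1(h)\int_0^1 h/\sigma_1(h)=F(h)$. The concavity and non-negativity of $h$ are inherited from the convexity assumption on the $\Omega_\epsilon$. The main obstacle is the lower-bound step: establishing quantitatively that minimizing eigenfunctions lose their vertical dependence at leading order, and, in the Steklov case, correctly identifying the factor $2$ produced by the two parallel long sides of $\partial\Omega_\epsilon$ so that it lines up with $P(\Omega_\epsilon)\to 2$ and disappears in the final ratio.
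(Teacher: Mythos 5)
Your plan is correct and follows essentially the same route as the paper: upper bounds via the one-dimensional eigenfunctions used as test functions, lower bounds via rescaling to a fixed reference domain, compactness, and identification of the limit as an admissible competitor for $\mu_1(h)$ and $\sigma_1(h)$, then combination with $|\Omega_\epsilon|\sim\epsilon\int_0^1 h$ and $P(\Omega_\epsilon)\to 2$. The only technical point your sketch glosses over is the one the paper spends effort on in the Steklov lower bound, namely obtaining the bulk $L^2$ bound on $v_\epsilon$ from the boundary normalization (done there via a Robin eigenvalue estimate and the Cheeger constant of a thin rectangle), which is what makes the compactness step legitimate.
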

In order to obtain this result in Lemma \ref{lAS} and in Lemma \ref{lAN} we prove general asymptotic behaviours for Neumann and Steklov eigenvalues on collapsing domains. Similar results for the Neumann eigenvalues, but in a different geometrical context, where proved in \cite{BCL21} and \cite{KT19}. We want to highlight the fact that the limit eigenvalues problems in Lemma \ref{lAS} and in Lemma \ref{lAN} are non-standard: since the function $h$ can vanish at the boundary, they are non-uniformly elliptic. We are not aware
of similar asymptotic behaviour in the literature.

In the rest of Section \ref{sCS} we are interested in studying in which way a sequence of thin domains $\Omega_{\epsilon}$ must collapse in order to obtain the lowest possible value of the limit $F(\Omega_{\epsilon})$.
From Theorem \ref{tAF} this problem is equivalent to study the minimization problem for the one-dimensional spectral functional $F(h)$ in the class of $L^{\infty}(0,1)$, concave and non negative functions. In particular in Theorem \ref{tSM} we will show that there exists a minimizer and also that the function $h\equiv 1$ is a local minimizer.

Section \ref{sULB} is devoted to the study of upper and lower bounds for the functionals $F(h)$ and $F(\Omega)$. We start by showing the following bounds for the functional $F(h)$
\begin{theorem}\label{tULBG}
For every non negative and concave function $h\in L^{\infty}(0,1)$ the following inequalities hold
\begin{equation*}
\frac{\pi^2}{12}\leq F(h)\leq 4
\end{equation*}
\end{theorem}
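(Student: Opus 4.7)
My plan is to prove the two inequalities separately: the lower bound by elementary trial functions, the upper bound by a change of variables reducing it to a pointwise comparison of weights.

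For $F(h)\ge\pi^2/12$, I would bound $\sigma_1(h)$ from above using the linear trial function $v(x)=x-1/2$, which satisfies $\int_0^1 v\,dx=0$; inserting it into the Rayleigh characterization of $\sigma_1(h)$ yields $\sigma_1(h)\le L/(1/12)=12L$. For $\mu_1(h)$ I would invoke the classical one-dimensional Payne--Weinberger inequality $\mu_1(h)\ge\pi^2$, valid for every non-negative concave weight on $(0,1)$; this is precisely the 1D slicing inequality behind the proof of $\mu_1(\Omega)\ge\pi^2/d(\Omega)^2$ for planar convex $\Omega$. Combining the two bounds gives $F(h)=\mu_1(h)L/\sigma_1(h)\ge\pi^2 L/(12L)=\pi^2/12$.

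For the upper bound $F(h)\le 4$, I would unfold both eigenvalue problems via the change of variable $y=\phi(x):=\int_0^x h(t)\,dt$, which maps $(0,1)$ onto $(0,L)$, followed by the rescaling $z=y/L$. A direct computation transforms $-(hu')'=\mu h u$ into the Steklov-type equation $-(H^2 U')'=\mu L^2 U$ on $(0,1)$, with the new weight
\[
  H^2(z):=h\!\left(\phi^{-1}(Lz)\right)^2.
\]
Hence $\mu_1(h)L^2=\sigma_1(H^2)$ and therefore $F(h)=\sigma_1(H^2)/\bigl(L\,\sigma_1(h)\bigr)$. Since $\sigma_1$ is non-decreasing and positively homogeneous of degree $1$ in its weight, it suffices to establish the pointwise comparison $H^2(z)\le 4L\,h(z)$ for every $z\in(0,1)$, for then $\sigma_1(H^2)\le\sigma_1(4Lh)=4L\,\sigma_1(h)$ and $F(h)\le 4$ follows.

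The main obstacle is proving this pointwise inequality. Setting $x_0:=\phi^{-1}(Lz)$, it reads $h(x_0)^2\le 4L\,h(z)$ whenever $\int_0^{x_0}h=Lz$ and $\int_{x_0}^1 h=L(1-z)$. I would derive it from two applications of concavity, treating separately the cases $z\le x_0$ and $z\ge x_0$ (which are related by the reflection $x\mapsto 1-x$). In the case $z\le x_0$, concavity of $h$ on $[0,x_0]$ together with $h(0)\ge 0$ gives $h(z)\ge h(x_0)\,z/x_0$, while the chord-area bound $\int_0^{x_0}h\ge x_0\,h(x_0)/2$ yields $x_0\,h(x_0)\le 2Lz$; multiplying these two estimates delivers $h(x_0)^2\le 2L\,h(z)$, which is even stronger than what is needed. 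The remaining case is symmetric. The only delicate point is to organize the case analysis cleanly and to check that the boundary behaviour $h(0),h(1)\ge 0$ causes no issue.
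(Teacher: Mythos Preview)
Your lower bound is essentially the paper's argument: both use $\mu_1(h)\ge\pi^2$ (Payne--Weinberger) and $\sigma_1(h)\le 12L$. The paper quotes the latter from \cite{T65}, whereas you obtain it directly with the linear trial function $v=x-\tfrac12$; the content is the same.

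Your upper bound, however, takes a genuinely different route and in fact yields a \emph{stronger} result. The paper compares $h$ with the triangular profile $T_{1/2}$: it uses $\mu_1(h)\le\mu_1(T_{1/2})$ (via the 2D bound of Cheng for collapsing rhombi) and $\sigma_1(h)\ge h(\tfrac12)\,\sigma_1(T_{1/2})$, then invokes the Bessel-function identity $\mu_1(T_{x_0})/\sigma_1(T_{x_0})=4$ of Lemma~\ref{lET} together with $\int_0^1 h\le h(\tfrac12)$ to conclude $F(h)\le4$. Your change of variables $y=\int_0^x h$ turns the Neumann problem into a Steklov-type problem with weight $H^2(z)=h(\phi^{-1}(Lz))^2$, giving $\mu_1(h)L^2=\sigma_1(H^2)$; the concavity argument you outline then delivers the pointwise bound $H^2(z)\le 2L\,h(z)$ (not merely $4L\,h(z)$), whence $F(h)=\sigma_1(H^2)/\bigl(L\sigma_1(h)\bigr)\le 2$. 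This is sharp: for every triangular profile $h=T_{x_0}$ one has $H^2\equiv 2L\,h$ identically, and indeed $F(T_{x_0})=2$ by Lemma~\ref{lET}. So your approach is both more elementary (no Bessel functions, no appeal to 2D estimates) and sharp, settling the thin-domain case of the paper's Conjecture~2.

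One small point worth making explicit in a write-up: rather than invoking ``$\sigma_1(H^2)$'' as an eigenvalue (the weight $H^2$ is not concave, so Lemma~\ref{lEES} does not apply verbatim), simply take the Steklov eigenfunction $v_1$ for $h$, set $u(x):=v_1\bigl(\phi(x)/L\bigr)$, check that $u\in H^1(0,1)$ with $\int_0^1 uh=L\int_0^1 v_1=0$, and plug $u$ into the Rayleigh quotient for $\mu_1(h)$; the pointwise inequality $h(x)^2\le 2L\,h(z)$ then gives $\mu_1(h)\le \tfrac{2}{L}\sigma_1(h)$ directly.
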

Then we will prove the following bounds for the functional $F(\Omega)$ 

\begin{theorem}\label{tULBF}
There exists an explicit constant $C_1$ such that, for every convex open set $\Omega\subset \mathbb{R}^2$, the following inequalities hold
\begin{equation*}
\frac{\pi^2}{6\sqrt[3]{18}}\leq F(\Omega)\leq C_1\leq 9.04
\end{equation*}
\end{theorem}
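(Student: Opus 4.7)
The plan is to prove the lower and upper inequalities separately, in each case combining a classical Neumann--Steklov comparison with convex-geometric estimates and, in the degenerate regime, the thin-domain analysis developed in the previous sections.

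For the lower bound, the Payne--Weinberger inequality $\mu_1(\Omega) \geq \pi^2/d(\Omega)^2$ (valid on any bounded convex $\Omega$) together with Weinstock's inequality $\sigma_1(\Omega) P(\Omega) \leq 2\pi$ (valid on simply connected plane domains, hence on convex ones) already gives
\[ F(\Omega) \geq \frac{\pi |\Omega|}{2 d(\Omega)^2}. \]
An elementary convex geometry argument yields $|\Omega| \geq w(\Omega) d(\Omega)/2$: decompose $\Omega$ into the two triangles having a diameter chord as common base and apex at the points where $h_+$ and $h_-$ reach their maximum separation, and observe that their combined altitude majorises the width. This produces $F(\Omega) \geq \pi w(\Omega)/(4 d(\Omega))$, which is effective only when $\Omega$ is not too thin. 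For thin convex sets I would invoke Theorem~\ref{tAF}, giving $F(\Omega_\varepsilon) \to F(h)$ for a concave limit profile $h$, together with the one-dimensional estimate $F(h) \geq \pi^2/12$ of Theorem~\ref{tULBG}. Splitting the class of convex domains into a thick part $w/d \geq \tau$ and a thin part $w/d \leq \tau$, using the two bounds respectively, and optimising $\tau$ produces the cube-root constant $\pi^2/(6\sqrt[3]{18})$; the cube root is the fingerprint of a balance between an estimate linear in $w/d$ and an $O(1)$ asymptotic estimate.

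For the upper bound, Szeg\H{o}'s isoperimetric inequality $\mu_1(\Omega)|\Omega| \leq \pi (j_{1,1}')^2$ handles the numerator. The real work is a lower bound on $\sigma_1(\Omega) P(\Omega)$ on convex $\Omega$, for which no classical analogue of Payne--Weinberger is available. I would construct an explicit test function in the Steklov Rayleigh quotient adapted to the diameter direction of $\Omega$ (an affine coordinate along the diameter, recentred so that its boundary integral vanishes), and use convex geometry to bound its boundary $L^2$-norm in terms of $P(\Omega)$ and $d(\Omega)^2$. Combining the resulting lower bound with Szeg\H{o} and with the Bieberbach isodiametric inequality $|\Omega| \leq \pi d(\Omega)^2/4$, and switching to Theorem~\ref{tAF} together with $F(h) \leq 4$ in the thin regime, produces an explicit constant $C_1$ whose numerical evaluation is bounded by $9.04$.

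The main obstacle is the absence of a universal lower bound on $\sigma_1(\Omega) P(\Omega)$ for convex planar sets: it is what forces the ad hoc test-function construction in the upper bound and, together with the analogous degeneracy of $\mu_1|\Omega|$ in the thin limit, forces the thick--thin case split in both bounds. A secondary technical point is that one needs quantitative control on the convergence in Theorem~\ref{tAF}, not merely its limit, so that the one-dimensional bounds $\pi^2/12 \leq F(h) \leq 4$ can be applied to genuine convex sets of small but positive width.
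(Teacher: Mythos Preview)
Your proposal has genuine gaps in both directions.

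\textbf{Lower bound.} Your mechanism cannot produce the cube root. Balancing a bound linear in $w/d$ against an $O(1)$ bound yields only the minimum of a linear function and a constant; no cube root appears. In the paper the splitting parameter is $\delta = P(\Omega)/D(\Omega)$, and the two regimes give $F(\Omega)\geq \pi^2/(6\delta)$ (via Payne--Weinberger together with the test function $x_1$ in the Steklov quotient, yielding $\sigma_1\leq 6|\Omega|/D^3$) and $F(\Omega)\geq \delta^2\pi^2/108$ (via the moment inequality $\int_{\partial\Omega}|x|^2\,ds\geq P^3/54$). It is the balance $1/\delta$ versus $\delta^2$ that forces $\delta^3=18$. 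Your Weinstock--Payne combination $F\geq \pi w/(4D)$ only covers $w/D\gtrsim 0.8$, and for smaller $w/D$ you invoke Theorem~\ref{tAF}; but that theorem is a limit statement and gives no inequality for a fixed convex set with small positive width. You flag this yourself (``quantitative control on the convergence''), but it is not a secondary technicality: without it the argument leaves the entire range $0<w/D\lesssim 0.8$ unbounded. The paper avoids thin-domain asymptotics entirely here; both regimes are handled by direct test functions.

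\textbf{Upper bound.} A test function inserted into the Steklov Rayleigh quotient produces an \emph{upper} bound on $\sigma_1$, never a lower bound, so your proposed construction cannot yield the lower bound on $\sigma_1(\Omega)P(\Omega)$ that you need. The paper obtains the required control by an entirely different route: the Kuttler--Sigillito inequality $\sigma_1\geq \mu_1 r/\bigl(2(1+\sqrt{\mu_1}\,D)\bigr)$ relates $\sigma_1$ back to $\mu_1$, and combining this with $\mu_1\leq \pi^2 w^2/|\Omega|^2$ and $|\Omega|\leq rP$ gives $F(\Omega)\leq 2\bigl(1+\pi wD/(rP)\bigr)$. The remaining work is purely convex-geometric, bounding $wD/(rP)$ via the known Blaschke--Santal\'o diagram for $(w,D,r)$. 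Szeg\H{o}'s inequality and Bieberbach play no role, and again the thin-domain limits are not used.
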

The explicit constant $C_1$ will be described in Section \ref{sULB}.

In the last Section we are interested in plotting the  Blaschke$-$Santaló diagrams
$$\mathcal{E}=\{(x,y) \mbox{ where } x=\sigma_1(\Omega)P(\Omega),\; y=\mu_1(\Omega)|\Omega|,\; \Omega\subset \R^2\}$$ 
$$\mathcal{E}^C=\{(x,y) \mbox{ where } x=\sigma_1(\Omega)P(\Omega),\; y=\mu_1(\Omega)|\Omega|,\; \Omega\subset \R^2, \;\Omega \mbox{ convex}.\}$$ 
This kind of diagrams for spectral quantities has been recently studied by different authors, let us mention for example
\cite{AH11}, \cite{BBFi}, \cite{BBP19}, \cite{HF21},  \cite{LZ19}.
In this section, we show that the diagram $\mathcal{E}$ is, in some sense, trivial while the diagram $\mathcal{E}^C$ is more complicated delimited by two unknown curves.
We present some numerical experiments and give some conjectures for this diagram.

\section{Existence or non-existence of extremal domains}\label{sE}
We show that, in general, the problem of minimization and maximization of the functional $F(\Omega)$ is ill posed, in the sense that one can construct sequences of domains for which $F(\Omega_{\epsilon})$ converge to $0$ and sequences of domains for which $F(\Omega_{\epsilon})$ converge to $+\infty$. 
\begin{proposition}\label{lIP}
The following equalities hold
\begin{equation*}
\inf\{F(\Omega):\Omega\subset \mathbb{R}^2\; \text{open and Lipschitz} \}=0,
\end{equation*}
\begin{equation*}
\sup\{F(\Omega):\Omega\subset \mathbb{R}^2\; \text{open and Lipschitz} \}=+\infty.
\end{equation*}
\end{proposition}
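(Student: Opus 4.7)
The plan is to exhibit two families of bounded Lipschitz plane domains, one along which $F\to 0$ and one along which $F\to+\infty$. Both are non-convex, and they exploit the different sensitivities of $\mu_1$ (which feels the chamber volumes) and of $\sigma_1$ (which feels the chamber boundary lengths) to the geometry of $\Omega$, allowing the ratio $F=\mu_1|\Omega|/(\sigma_1P)$ to be driven independently in either direction.

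For $\sup F=+\infty$, I take an asymmetric dumbbell $\Omega_{\varepsilon,r}$: two disks of radii $1$ and $r\in(0,1)$ joined by a thin rectangular channel of length $L$ and width $\varepsilon$, with the four junction corners slightly mollified to stay Lipschitz. The dumbbell test function (piecewise constant on each chamber, with the two constants chosen so that the zero-mean conditions in \eqref{eVFN} and \eqref{eVFS} hold, and interpolated linearly along the neck) is admissible in both Rayleigh quotients, and the classical dumbbell singular-perturbation theorem (the first eigenfunctions concentrate as piecewise constants in the vanishing-neck limit) gives
\[
\mu_1(\Omega_{\varepsilon,r})\sim\frac{\varepsilon(1+r^{2})}{\pi L r^{2}},\qquad \sigma_1(\Omega_{\varepsilon,r})\sim\frac{\varepsilon(1+r)}{2\pi L r},
\]
as $\varepsilon\to 0$. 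Hence $\mu_1/\sigma_1\sim 2/r$ while $|\Omega|$ and $P$ stay bounded, so $F\to+\infty$ after first sending $\varepsilon\to 0$ and then $r\to 0$ along a diagonal subsequence.

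For $\inf F=0$ the construction has to be multiply connected: Weinstock's inequality $\sigma_1 P\le 2\pi$ holds for every simply connected Lipschitz plane domain, so $\sigma_1 P$ cannot be made unbounded within that class; moreover the symmetric dumbbell-type simply connected constructions produce $\mu_1|\Omega|\to 0$ and $\sigma_1P\to 0$ at the same rate, so $F$ does not tend to $0$ along them. Instead one passes to multiply connected domains, where the Steklov isoperimetric bound depends on the topology and $\sigma_1P$ can be made arbitrarily large: a suitable multi-hole geometry (for instance a unit disk with $n$ small symmetric holes, tuned in the spirit of the higher-topology Steklov-maximising constructions of Hersch--Payne--Schiffer, Girouard--Polterovich and co-authors) realises $\sigma_1P\to+\infty$ as $n\to+\infty$, while $\mu_1|\Omega|$ stays bounded because small holes only weakly perturb the Neumann first eigenvalue. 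Consequently $F=\mu_1|\Omega|/(\sigma_1P)\to 0$ along such a sequence.

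The main technical obstacle in either direction is to upgrade the test-function upper bound on the relevant eigenvalue to an asymptotically sharp two-sided bound. For the asymmetric dumbbell this is the classical singular-perturbation/compactness argument: a normalised first eigenfunction converges (up to subsequences) strongly in $L^{2}$ to a function that is constant on each chamber and affine along the vanishing neck, pinning down the limit of the Rayleigh quotient to the explicit value above. For the multi-hole construction one needs the higher-topology Steklov isoperimetric estimates together with a perturbative argument showing that adding many small interior boundary components does not collapse $\mu_1|\Omega|$; this is the delicate step and is the reason the proof of the two equalities requires genuinely different constructions.
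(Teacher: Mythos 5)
Your construction for the supremum is a genuinely different route from the paper's and is viable. The paper instead invokes the Bucur--Nahon boundary-oscillation theorem, which produces a single sequence $\Omega_\epsilon$ whose normalized Neumann quantity converges to that of a prescribed $\Omega$ (the disc, so $\mu_1|\Omega|=\pi {j'}_{11}^2$) while its normalized Steklov quantity converges to that of a different prescribed $\omega$ (a thin-necked dumbbell, so $\sigma_1P\le\delta$); this decouples numerator and denominator completely and handles both the sup and the inf symmetrically. Your asymmetric dumbbell achieves the decoupling geometrically: since $\mu_1$ weights the chambers by area and $\sigma_1$ by boundary length, the ratio $\mu_1/\sigma_1$ degenerates like a power of $1/r$. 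Two caveats: (i) your stated asymptotic for $\sigma_1$ ignores the contribution of the two neck sides (total length $2L$) to $\int_{\partial\Omega}u\,ds$ and $\int_{\partial\Omega}u^2\,ds$, which is \emph{not} negligible; the correct test-function bound is $\sigma_1=O(\varepsilon/L)$ uniformly in $r$, which only strengthens your conclusion, but the formula $\sigma_1\sim\varepsilon(1+r)/(2\pi Lr)$ is not right. (ii) Only the \emph{upper} bound on $\sigma_1$ (test function) and the \emph{lower} bound on $\mu_1$ (Jimbo--Morita type dumbbell asymptotics, which are indeed classical) are needed, and the diagonal argument in $\varepsilon$ then $r$ closes the proof. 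So this half stands.

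The infimum half has a genuine gap: its central mechanism is impossible. You claim that for multiply connected planar domains $\sigma_1(\Omega)P(\Omega)$ can be made arbitrarily large while $\mu_1(\Omega)|\Omega|$ stays bounded away from zero. But Kokarev's bound (and the Girouard--Karpukhin--Lagac\'e result quoted in this very paper) gives $\sigma_1(\Omega)P(\Omega)<8\pi$ for \emph{every} bounded Lipschitz planar domain, regardless of the number of holes -- the dependence on topology you invoke concerns the genus of a surface, not the connectivity of a plane domain. In the actual homogenization construction of Girouard--Karpukhin--Lagac\'e the perforated domains satisfy $\sigma_1P\to 8\pi$ \emph{and} $\mu_1|\Omega|\to 0$: it is the numerator collapsing, not the denominator exploding, that forces $F\to 0$, contrary to your assertion that the small holes leave $\mu_1|\Omega|$ essentially unperturbed. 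So as written the argument does not close, although it is repairable by quoting that theorem correctly (the paper records exactly this as an alternative proof). Your preliminary claim that the infimum construction "has to be multiply connected" is also false: the paper's own proof takes a simply connected dumbbell with $\mu_1|\Omega|\le\delta$ and applies Bucur--Nahon boundary oscillations to push $\sigma_1P$ up to $2\pi$ while keeping $\mu_1|\Omega|\le 2\delta$, staying simply connected throughout; Weinstock's bound $\sigma_1P\le 2\pi$ is no obstruction since one only needs the denominator bounded below, not large.
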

In order to prove that the infimum is $0$ we construct a sequence of domains $\Omega_{\epsilon}$ for which $\sigma_1(\Omega_{\epsilon})P(\Omega_{\epsilon})\rightarrow c> 0$ and $\mu_1(\Omega_{\epsilon})|\Omega_{\epsilon}|\rightarrow 0$. We use similar ideas in order to construct another sequence $\Omega_{\epsilon}$ for which $\sigma_1(\Omega_{\epsilon})P(\Omega_{\epsilon})\rightarrow 0$ and $\mu_1(\Omega_{\epsilon})|\Omega_{\epsilon}|\rightarrow c>0$, proving in this way that the supremum is $+\infty$. 

We construct the desired sequences $\Omega_{\epsilon}$ by perturbing a given set $\Omega$ by adding oscillations on the boundary (see \cite{BN20} for the details of 
the construction). Given two compact sets $\Omega_1,\Omega_2 \in \mathbb{R}^2$ we denote by $d_H(\Omega_1,\Omega_2)$ the Hausdorff distance 
between the two sets
(see \cite{HPb}), the key result is the following 
\begin{theorem}[Bucur-Nahon \cite{BN20}]\label{tBNS}
Let $\Omega,\omega\subset \mathbb{R}^2$ be two smooth, conformal open sets. Then there exists a sequence of smooth open sets $(\Omega_{\epsilon})_{\epsilon>0}$ with uniformly bounded perimeter and satisfying a uniform $\varepsilon$-cone condition (see \cite{HPb}) such that
\begin{equation}
\lim_{\epsilon\rightarrow 0}d_H(\partial\Omega_{\epsilon},\partial\Omega)=0,\;\lim_{\epsilon\rightarrow 0}P(\Omega_{\epsilon})\sigma_k(\Omega_{\epsilon})=P(\omega)\sigma_k(\omega),\; \lim_{\epsilon\rightarrow 0}|\Omega_{\epsilon}|\mu_k(\Omega_{\epsilon})=|\Omega|\mu_k(\Omega).
\end{equation}
\end{theorem}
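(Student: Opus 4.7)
The overall strategy is a boundary-homogenization construction exploiting the conformal invariance of the 2D Dirichlet energy. Fix a conformal bijection $\phi:\Omega\to\omega$ (which exists and extends smoothly up to the closure since both sets are smooth and simply connected) and set $\rho:=|\phi'|$ on $\partial\Omega$. Then $\int_{\partial\Omega}\rho\,ds=P(\omega)$ and, for any $u\in H^1(\omega)$, the pullback $v=u\circ\phi$ satisfies $\int_\Omega|\nabla v|^2\,dx=\int_\omega|\nabla u|^2\,dy$, while the boundary and volume measures transform by $\rho$ and $\rho^2$ respectively. The plan is to decorate $\partial\Omega$ with small, high-frequency corrugations whose local arc-length density is $\rho$, so that boundary integrals on $\partial\Omega_\epsilon$ asymptotically behave as $\rho$-weighted integrals on $\partial\Omega$ (hence match those on $\partial\omega$), while $\Omega_\epsilon$ remains Hausdorff-close to $\Omega$ and its bulk geometry is essentially untouched.

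Concretely, I would parametrize $\partial\Omega$ by arc length, partition it into $O(1/\epsilon)$ arcs, and on each arc replace the original curve by a graph oscillating with amplitude $\delta_\epsilon\to 0$ and wavelength $O(\epsilon)$, tuned so that the local arc length equals $\rho$ times the original length up to lower order terms. From this construction one reads directly that $d_H(\partial\Omega_\epsilon,\partial\Omega)\le\delta_\epsilon\to 0$, that $|\Omega_\epsilon|\to|\Omega|$, that $P(\Omega_\epsilon)\to\int_{\partial\Omega}\rho\,ds=P(\omega)$, and, for sufficiently slow $\delta_\epsilon$, a uniform $\varepsilon$-cone condition with a fixed opening. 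The Neumann convergence $|\Omega_\epsilon|\mu_k(\Omega_\epsilon)\to|\Omega|\mu_k(\Omega)$ then follows from the classical stability of Neumann spectra under Hausdorff convergence with uniform cone regularity: the cone condition delivers uniform $H^1$-extension operators from $\Omega_\epsilon$ to a fixed ball, which suffices for Mosco-convergence of the Neumann forms and hence convergence of the whole spectrum.

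The main obstacle is the Steklov convergence $P(\Omega_\epsilon)\sigma_k(\Omega_\epsilon)\to P(\omega)\sigma_k(\omega)$. For the upper bound I would take the $k$-th Steklov eigenfunction $u_k$ on $\omega$, form $v_k=u_k\circ\phi$ on $\Omega$ (which has the same Dirichlet energy by conformal invariance), and extend $v_k$ to $\Omega_\epsilon$ through the thin corrugated collar by a bounded extension operator. A direct computation on the periodic corrugation pattern yields $\int_{\partial\Omega_\epsilon}v_k^2\,ds\to\int_{\partial\Omega}v_k^2\rho\,ds=\int_{\partial\omega}u_k^2\,ds$, so that the Rayleigh quotient on $\Omega_\epsilon$ converges to the Steklov quotient of $u_k$ on $\omega$; the variational min-max principle applied to the first $k$ such transported eigenfunctions then delivers $\limsup P(\Omega_\epsilon)\sigma_k(\Omega_\epsilon)\le P(\omega)\sigma_k(\omega)$.

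The matching lower bound is the technical heart of the proof. Here one must show that any sequence of normalized eigenfunctions $w_\epsilon$ on $\Omega_\epsilon$ with bounded Rayleigh quotient admits, along a subsequence, an interior limit $w$ on $\Omega$ whose conformal transport $w\circ\phi^{-1}$ is an admissible test function in the Steklov min-max on $\omega$, with Rayleigh quotient bounded by $\liminf P(\Omega_\epsilon)\sigma_k(\Omega_\epsilon)/P(\omega)$. Making this rigorous is where the real work lies: one needs a boundary unfolding operator adapted to the specific corrugation profile, a uniform trace inequality on $\Omega_\epsilon$ that does not degenerate as $\epsilon\to 0$ (so that the normalization $\int_{\partial\Omega_\epsilon}w_\epsilon^2\,ds=1$ survives in the limit), and interior $H^1$-compactness via the extension operator provided by the uniform cone condition. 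Once these tools are in place, a two-scale convergence argument identifies the boundary behavior of $w_\epsilon$ with $\rho\,ds$-weighted integration on $\partial\Omega$, which is conformally identified with $ds$ on $\partial\omega$; the lower bound then follows by comparing with the Steklov min-max on $\omega$, and combining with the upper bound by induction on $k$ closes the theorem.
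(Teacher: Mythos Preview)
The paper does not contain a proof of this theorem: it is stated as a result of Bucur and Nahon \cite{BN20} and used as a black box in the proof of Proposition~\ref{lIP}. There is therefore no ``paper's own proof'' against which to compare your proposal.

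That said, your sketch is a faithful outline of the Bucur--Nahon strategy: the conformal-pullback identification of the Steklov problem on $\omega$ with a $\rho$-weighted Steklov problem on $\Omega$, followed by high-frequency boundary corrugation so that arclength on $\partial\Omega_\epsilon$ homogenizes to $\rho\,ds$ on $\partial\Omega$, is exactly the mechanism in \cite{BN20}. You correctly identify the lower bound for the Steklov spectrum as the technical crux, and the ingredients you list (uniform cone condition giving extension operators, uniform trace estimates, two-scale/unfolding identification of the limit boundary measure) are the right ones. The Neumann part is indeed soft once the uniform cone condition is in place. Your outline is not a complete proof---the quantitative balance between amplitude $\delta_\epsilon$ and wavelength needed to secure simultaneously the cone condition, the arclength-density convergence, and the non-degeneracy of traces requires care---but it captures the architecture of the argument in the cited reference.
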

\begin{proof}[Proof of Proposition \ref{lIP}]
Let $\delta>0$, let $\Omega$ be a simply connected domain for which $\mu_1(\Omega)|\Omega|\leq \delta$ (for example a dumbbell shape domain with the channel very thin see \cite{JM92}). Let $\omega$ be a disc, we know that $\sigma_1(\omega)P(\omega)=2\pi$. Using Theorem \ref{tBNS} we can perturb the domain $\Omega$  in such a way that 
$$
\lim_{\epsilon\rightarrow 0}P(\Omega_{\epsilon})\sigma_1(\Omega_{\epsilon})=2\pi,\ \ \lim_{\epsilon\rightarrow 0}|\Omega_{\epsilon}|\mu_1(\Omega_{\epsilon})\leq 2\delta
$$
Thus we can conclude that, for $\epsilon$ small enough
\begin{equation*}
F(\Omega_{\epsilon})\leq \frac{2\delta}{2\pi-1}
\end{equation*}
since $\delta$ was arbitrary small we conclude that:
\begin{equation*}
\inf\{F(\Omega):\Omega\subset \mathbb{R}^2\; \text{open and Lipschitz} \}=0.
\end{equation*}
For the other case, we choose $\Omega$ as the unit disc, then $\mu_1(\Omega)|\Omega|=\pi {j'}_{11}^2$ ($j'_{11}$ is the first zero of the derivative of the Bessel function
$J_1$). Let $\omega$ be a set for which $\sigma_1(\omega)P(\omega)\leq \delta$ (for example a dumbbell shape domain with the channel very thin see \cite{BHM21}), using arguments similar at the ones above we conclude that 
\begin{equation*}
\frac{\pi j_{11}^2 -1}{2\delta}\leq F(\Omega_{\epsilon}),
\end{equation*}
since $\delta$ was arbitrary small we conclude that:
\begin{equation*}
\sup\{F(\Omega):\Omega\subset \mathbb{R}^2\; \text{open and Lipschitz} \}=+\infty.
\end{equation*}
\end{proof}
We mention that there exists another way to construct a sequence of domains such that $F(\Omega_{\epsilon})\rightarrow 0$, this method is based on an 
homogenization technique, the key result is the following (see Theorem 1.14 in \cite{GKL20}):
\begin{theorem}[Girouard-Karpukhin-Lagac\'e \cite{GKL20}]
There exists a sequence of domains $\Omega_{\epsilon}\subset \mathbb{R}^2$ such that for every $k\in \mathbb{N}$ the following holds
\begin{align*}
\sigma_k(\Omega_{\epsilon})P(\Omega_{\epsilon})&\rightarrow 8\pi k\\
\mu_k(\Omega_{\epsilon})|\Omega{_\epsilon}|&\rightarrow 0
\end{align*}
\end{theorem}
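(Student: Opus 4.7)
The plan is to construct $\Omega_\epsilon$ by superposing two independent perturbative effects: a ``near-disconnection'' into many subregions (which will push the Neumann eigenvalues to zero) and a dense perforation within each subregion (which will inflate the perimeter and push $\sigma_k P$ to the topological bound $8\pi k$). Intuitively, the Neumann side requires many quasi-independent connected components, while the Steklov side needs a boundary that is very long and uniformly distributed.

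First, I would introduce three auxiliary sequences $N_\epsilon\to\infty$ and $\delta_\epsilon,\tau_\epsilon\to 0$. Inside a bounded Lipschitz reference domain $D$, one carves $N_\epsilon$ subregions $D_1^\epsilon,\dots,D_{N_\epsilon}^\epsilon$ of comparable size, connected pairwise by thin channels of width $\delta_\epsilon$. Inside each subregion, one removes a $\tau_\epsilon$-periodic array of small holes. The resulting set $\Omega_\epsilon$ is open, connected, Lipschitz, has area uniformly bounded from above and below, while its perimeter diverges at a rate prescribed by the perforation density $\tau_\epsilon$.

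For the Neumann side, I would exploit the variational characterization \eqref{eVFN}. Fix $k$ and take $\epsilon$ small enough that $N_\epsilon > k$. Build $k$ trial functions $u_1,\dots,u_k$ that are locally constant on each $D_j^\epsilon$ (piecewise constant with zero mean against area) and interpolated linearly across the thin channels; their gradients are supported only in the channels, so the Rayleigh quotients are controlled by a quantity of order $\delta_\epsilon$, independent of the perforation. A standard orthogonalization of such trial functions combined with the min-max formula yields $\mu_k(\Omega_\epsilon)|\Omega_\epsilon| \to 0$.

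For the Steklov side, I would invoke a boundary-homogenization principle in the spirit of Theorem \ref{tBNS}: as the boundary of $\Omega_\epsilon$ becomes uniformly dense, the normalized Steklov spectrum converges to the spectrum of a limit problem on $D$ against a homogenized boundary weight, and the sharp Kokarev-type bound $\sigma_k P \leq 8\pi k$ is saturated in the limit by choosing the cell geometry appropriately (the hole shape within one period playing the role of the conformal factor realizing the maximum).

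The main obstacle is the simultaneous tuning of the three parameters $(N_\epsilon,\delta_\epsilon,\tau_\epsilon)$. The perforations make the boundary integral $\int_{\partial\Omega_\epsilon}v^2\,ds$ in \eqref{eVFS} large, while the near-disconnection makes the volume integral in \eqref{eVFN} concentrate on isolated subregions; one must ensure these two geometric effects do not interfere. In particular, the thin channels must be narrow enough that they contribute negligibly both to $P(\Omega_\epsilon)$ and to the Steklov Rayleigh quotient, yet not so narrow as to destroy a uniform Lipschitz property needed for the Steklov homogenization to apply. This delicate balance between the scales, rather than either limit taken in isolation, is the technical core of the construction.
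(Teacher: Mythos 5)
First, a remark on scope: the paper does not prove this statement at all. It is quoted verbatim as Theorem 1.14 of \cite{GKL20} and used as an imported black box, so there is no internal proof to compare yours against; your attempt has to stand on its own. Unfortunately it does not, for two related reasons.

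The decisive problem is that your two mechanisms cancel each other instead of coexisting. The homogenization principle you invoke (Theorem 1.11 of \cite{GKL20}, which this paper itself uses in the Blaschke--Santal\'o section) says that for a perforated domain $\sigma_k(\Omega_{\epsilon})P(\Omega_{\epsilon})\rightarrow \mu_k(\Omega,\beta)\int_{\Omega}\beta\,dx$, i.e.\ the limit is a \emph{weighted Neumann eigenvalue of the unperforated background}. If the background is your $N_{\epsilon}$-piece quasi-disconnected domain, the same piecewise-constant test functions you use on the Neumann side kill this weighted eigenvalue, and in fact kill $\sigma_k P$ directly: with the perimeter spread uniformly over the pieces (your periodic array), a function equal to constants on the pieces, normalized to have zero boundary mean and interpolated across channels of width $\delta_{\epsilon}$, has Steklov Rayleigh quotient of order $N_{\epsilon}\delta_{\epsilon}/P(\Omega_{\epsilon})$, so $\sigma_k(\Omega_{\epsilon})P(\Omega_{\epsilon})=O(N_{\epsilon}\delta_{\epsilon})$. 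This is exactly the quantity that must tend to $0$ for your Neumann estimate $\mu_k(\Omega_{\epsilon})|\Omega_{\epsilon}|=O(N_{\epsilon}\delta_{\epsilon})$ to close. As described, your construction produces the limit $(0,0)$, not $(8\pi k,0)$; the ``delicate balance'' you defer to is not a technicality but an obstruction.

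Second, even without the channels, a $\tau_{\epsilon}$-periodic perforation with one fixed cell geometry homogenizes to a \emph{constant} density $\beta$, for which $\mu_k(\Omega,\beta)\int_{\Omega}\beta\,dx=\mu_k(\Omega)|\Omega|$; already for $k=1$ this is at most $\pi {j'}_{11}^2\approx 10.65$ by Szeg\"o--Weinberger, far below $8\pi\approx 25.13$. Reaching $8\pi k$ is not a matter of ``choosing the cell geometry within one period'': it requires realizing $\sup_{\beta}\mu_k(\Omega,\beta)\int_{\Omega}\beta\,dx=8\pi k$, whose maximizing densities are necessarily highly non-uniform degenerating sequences (concentration at $k$ points, i.e.\ bubbling of $k$ round spheres, the Hersch bound and its higher-index analogues). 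That is the deep input of \cite{GKL20}, and it is precisely the ingredient missing from your sketch; once it is put in, the Neumann decay must be obtained by a mechanism compatible with it, not by the naive multi-dumbbell.
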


From now on we will restrict ourselves to the class of convex domains. As recalled in the Introduction, a minimizing (or a maximizing) sequence of plane
convex domains $\Omega_{\epsilon}$ has the following behaviour:
\begin{enumerate}[i]
\item either the minimizing (maximizing) sequence $\Omega_{\epsilon}$ converges to a segment (for the Hausdorff metric).
\item or the minimizing (maximizing) sequence $\Omega_{\epsilon}$ converges to a convex open set $\Omega$
\end{enumerate}
In the second case (ii), we deduce that there exists a minimizer (maximizer) for the functional $F(\Omega)$ in the class of convex domains.
Indeed, the four quantities area, perimeter, $\mu_1$ and $\sigma_1$ are continuous for Hausdorff convergence of plane convex domains
(see \cite{HPb} for the first three and \cite{Bog17} or \cite{BGT20} for Steklov eigenvalues).

\section{Convex case: Thin Domains}\label{sCS}
We start by defining the following space of functions
\begin{equation}\label{eCf}
\mathcal{L}:=\{h\in L^{\infty}(0,1): h \,\text{ non negative, concave and } \int_0^1h=1\}.
\end{equation}
Given two functions $h^- \in \mathcal{L}$ and $h^+ \in \mathcal{L}$, we define the class of thin domains $\Omega_{\epsilon}$ in the following way (see Remark \ref{rCG}):
\begin{equation}\label{eOme}
\Omega_{\epsilon}=\{(x,y)\in \mathbb{R}^2 \;|\,\, 0\leq x\leq 1, \,\,-\epsilon h^-(x)\leq y\leq \epsilon h^+(x) \}.
\end{equation}

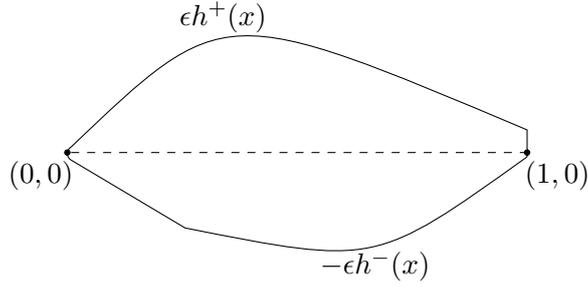
\begin{figure}[H]
\centering
\tdplotsetmaincoords{0}{0}
\begin{tikzpicture}[tdplot_main_coords, scale=1]
\draw (-3,-0.1,0)  arc (-120:-240:0.1) .. controls (-1,2,0,).. (3,0.3,0)--(3,0.05,0)arc (30:-40:0.1).. controls (1,-1.5,0,)..(-1.5,-1,0)--(-3,-0.1,0);
\draw [dashed](-3,0,0) -- (3,0,0);
\draw (-1,1.8,0) node {$\epsilon h^{+}(x)$};
\draw (1,-1.5,0) node {$-\epsilon h^{-}(x)$};
\draw (-3.4,-0.3,0) node {$(0,0)$};
\draw (3.4,-0.3,0) node {$(1,0)$};
\filldraw  (-3.05,0,0) circle (1pt);
\filldraw  (3,0,0) circle (1pt);
\end{tikzpicture}
\caption{Description of the thin domain $\Omega_{\epsilon}$}
\end{figure}
We notice that the functional $F(\Omega)$ is scale invariant so without loss of generality we can consider domains that have diameter  $D(\Omega_{\epsilon})\to 1$ when $\epsilon\to 0$.

In the next lemma we give a compactness result for the space of functions $\mathcal{L}$
\begin{lemma}\label{lCL}
Let $h_n\in \mathcal{L}$ be a sequence of functions, then there exists a function $h\in \mathcal{L}$ such that, up to a subsequence that we still denote by $h_n$, we have 
\begin{align*}
h_n &\rightarrow h \quad \text{in} \quad L^2(0,1)\\
h_n &\rightarrow h \quad \text{uniformly on every compact subset of} \; (0,1).
\end{align*}
\end{lemma}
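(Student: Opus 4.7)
The plan is to exploit two elementary consequences of concavity: a uniform $L^{\infty}$ bound on the sequence $h_n$, and a uniform Lipschitz bound on every compact subinterval of $(0,1)$. Together these give equicontinuity, after which Arzelà–Ascoli (with a diagonal extraction over a nested sequence of compacta exhausting $(0,1)$) produces the desired limit, and the remaining properties follow by standard approximation arguments.

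First I would establish a uniform bound $\|h_n\|_{L^\infty(0,1)} \le 2$. If $M_n := \sup h_n$ is attained at $x_n \in [0,1]$, then by concavity $h_n$ lies above the tent function with vertex $(x_n, M_n)$ and base $[0,1]$; that tent has area $M_n/2$, so the normalization $\int_0^1 h_n = 1$ forces $M_n \le 2$. Next, for fixed $[a,b] \subset (0,1)$, any concave function $h \ge 0$ with $\|h\|_\infty \le 2$ satisfies
\begin{equation*}
|h'_{\pm}(x)| \le \max\Bigl(\tfrac{2}{a},\tfrac{2}{1-b}\Bigr) \qquad \text{for all } x \in [a,b],
\end{equation*}
which is immediate from the monotonicity of $h'_\pm$ together with the inequalities $h'_+(a) \le h(a)/a$ and $h'_-(b) \ge -h(b)/(1-b)$ that one gets by writing concavity between $0$, $a$ and between $b$, $1$. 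Hence the family $\{h_n\}$ is uniformly Lipschitz on $[a,b]$, and in particular equicontinuous there.

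Applying Arzelà–Ascoli on $[1/k, 1-1/k]$ for $k=2,3,\dots$ and a diagonal extraction, I obtain a subsequence (still denoted $h_n$) converging uniformly on every compact subset of $(0,1)$ to a function $h$. The pointwise limit of concave functions is concave, $h \ge 0$, and the uniform bound passes to the limit to give $h \in L^\infty(0,1)$ with $\|h\|_\infty \le 2$. To show $\int_0^1 h = 1$, I split
\begin{equation*}
\int_0^1 h_n = \int_{1/k}^{1-1/k} h_n + \int_0^{1/k} h_n + \int_{1-1/k}^1 h_n,
\end{equation*}
use uniform convergence on the middle interval and the bound $|h_n| \le 2$ on the two end pieces (whose combined contribution is at most $4/k$), then let $n \to \infty$ and $k \to \infty$; the same argument applied to $h$ gives $\int_0^1 h = 1$, so $h \in \mathcal{L}$.

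Finally, for the $L^2$ convergence, the sequence $h_n$ is dominated by $2$ and converges pointwise a.e.\ on $(0,1)$ to $h$, so by the dominated convergence theorem $h_n \to h$ in $L^2(0,1)$. The main technical point is really the boundary behavior — concave functions need not be continuous at the endpoints, so uniform convergence can only be expected on compact subsets of the open interval; this is why the integral identity and the $L^2$ convergence must be recovered through the uniform $L^\infty$ bound rather than from uniform convergence on $[0,1]$.
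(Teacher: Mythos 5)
Your proof is correct and follows essentially the same route as the paper: the uniform bound $\|h_n\|_{L^\infty}\le 2$ from concavity and the normalization, a derivative bound of order $1/\delta$ on compact subintervals, Arzel\`a--Ascoli with a diagonal extraction, and recovery of the $L^2$ convergence and of $\int_0^1 h=1$ via the uniform bound. The only (harmless) difference is that you work directly with the one-sided derivatives of concave functions, whereas the paper first assumes the $h_n$ are $C^1$ and then concludes by a density argument.
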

\begin{proof}
From the concavity of the functions $h_n$ and from the fact that $||h_n||_{L^1(0,1)}=1$, we conclude that $||h_n||_{L^{\infty}(0,1)}\leq 2$. 
Let us assume first that the functions $h_n$ are smooth, say $C^1$ inside $(0,1)$. 
%We now consider a sequence of functions $\phi_n\in C^{\infty}([0,1])\cap \mathcal{L}$ such that $||\phi_n-h_n||_{L^2(0,1)}\leq \epsilon$, for $\epsilon$ arbitrary small. 
We fix a parameter $0<\delta<1$ and we consider the interval $I_{\delta}=[\delta, 1-\delta]$. The functions $h_n$ being uniformly bounded in $I_{\delta}$,  from the concavity and the uniform bound we conclude
\begin{equation*}
-\frac{2}{\delta}\leq -\frac{h_n(x)}{\delta}\leq h'_n(x)\leq \frac{h_n(x)}{\delta}\leq \frac{2}{\delta}\quad \forall\, x\in I_{\delta}.
\end{equation*}
We can now apply Ascoli-Arzelà Theorem and we conclude that there exists a function $h\in C([0,1])$ such that, for every $0<\delta<1$, up to a subsequence that we 
still denote by $h_n$
\begin{equation*}
h_n\rightarrow h \quad \text{uniformly in} \quad I_{\delta}.
\end{equation*}
From the convergence above and from the fact that $h_n$ is concave for every $n$ we infer that $h$ is also concave in $I_{\delta}$. So for every interval of the type $I_{\delta}$ we found the limit function $h$. 

Now we need to analyze what happens on the two extremities of the interval $[0,1]$. We consider the bounded sequence $h_n(0)$, up to a subsequence, this sequence has a limit, we extend the function $h$ that we found above to be equal at that limit in $x=0$, so $h(0)=\lim_{n\to \infty} h_n(0)$. We use the same argument for the point $x=1$. Now it is straightforward to check (by passing to the limit in the concavity inequality for $h_n$) that $h$ is a concave function on the interval $[0,1]$ and that 
\begin{equation*}
h_n\rightarrow h \quad \text{in} \quad L^2(0,1).
\end{equation*}
We finally argue by density to extend the previous result to a general sequence $h_n$.
\end{proof}

\subsection{Asymptotic behaviour of eigenvalues}
In this section we present some general results concerning the asymptotic behaviour of $\sigma_k$ and $\mu_k$ in a wide class of collapsing domains. We then apply this results in the particular case of thin domains in order to obtain the asymptotics given in Theorem \ref{tAF}.

We start with the analysis of the Steklov eigenvalues:
\begin{lemma}\label{lAS}
Let $h^+\in  L^{\infty}(0,1) $ and $h^-\in  L^{\infty}(0,1)$ be two non negative functions, we define the following collapsing domains:
\begin{equation*}
\Omega_{\epsilon}=\{(x,y)\in \mathbb{R}^2 \;|\,\, 0\leq x\leq 1, \,\,-\epsilon h^-(x)\leq y\leq \epsilon h^+(x) \}.
\end{equation*}
Let $h=h^++h^-$, if there exist $K>0$ and $p<2$ such that $h(x)\geq K(x(1-x))^p$ a. e. in $(0,1)$, then 
\begin{equation*}
\sigma_k(\Omega_{\epsilon})= \frac{\sigma_k(h)}{2}\epsilon+o(\epsilon)\quad \text{as} \quad \epsilon\rightarrow 0,
\end{equation*}
where $\sigma_k(h)$ is the $k-$th non trivial eigenvalue of
\begin{equation}\label{eES}
\begin{cases}
\vspace{0.3cm}
   -\frac{d}{dx}\big(h(x)\frac{d v}{dx}(x)\big)=\sigma(h) v(x)  \qquad  x\in \big(0,1\big) \\
      h(0)\frac{dv}{dx}(0)=h(1)\frac{dv}{dx}(1)=0.
\end{cases}
\end{equation}
\end{lemma}
\begin{remark}
In the previous Lemma the problem \eqref{eES} is understood in the weak sense. The function $h$ is allowed to vanish at the extremities of the interval, 
therefore the operator $-\frac{d}{dx}\big(h(x)\frac{d v}{dx}\big)$ is not uniformly elliptic and the existence of eigenvalues and eigenfunctions does not follow 
in a classical way. For this reason in the first part of the proof we will prove the existence of the eigenvalues, under the assumption that we made on the function $h$.
\end{remark}
\begin{proof}[Proof of Lemma \ref{lAS}]
Let $f\in L^2(0,1)$, the inverse of the operator $-\frac{d}{dx}\big(h(x)\frac{d v}{dx}\big)$ with the boundary conditions $h(0)v'(0)=h(1)v'(1)=0$ is given by the following integral representation (see \cite{T65}):
\begin{equation}\label{eIntRepS}
v(x)=\int_0^1g(x,y)f(y)dy\,\,\text{with}\,\,g(x,y)=\int^{\min(x,y)}_0\frac{t}{h(t)}dt+\int_{\max(x,y)}^1\frac{1-t}{h(t)}dt.
\end{equation}
From the  assumption on the function $h$ it follows that $g(x,y)\in L^2([0,1]\times [0,1])$. We conclude that the integral operator defined in \eqref{eIntRepS} is an Hilbert-Schmidt integral operator and so problem \eqref{eES} posses a sequence of eigenvalues and eigenfunctions. In particular the eigenvalue $\sigma_k(h)$ admits the following variational characterization:
\begin{equation}\label{eVCS1}
\sigma_k(h)=\inf_{E_k}\sup_{0\neq v\in E_k} \frac{\int_0^1(v')^2hdx_1}{\int_0^1v^2dx_1},
\end{equation}
where the infimum is taken over all $k-$dimensional subspaces of $H^1(0,1)$ which are $L^2-$ orthogonal to constants.

Let $f_k$ be the eigenfunction of the problem \eqref{eES} associated to the eigenvalue $\sigma_k(h)$, we define the function $F_k(x_1,x_2)=f_k(x_1)$ for every $(x_1,x_2)\in \Omega_{\epsilon}$. We define the mean value of the function $F_k$ on $\partial\Omega_\epsilon$:
\begin{equation*}
MF_{k,\epsilon}:=\frac{1}{P(\Omega_{\epsilon})}\int_{\partial\Omega_{\epsilon}}F_kds=\frac{1}{P(\Omega_{\epsilon})}\int_0^1f_k(\sqrt{1+(\epsilon h^{+\prime})^2}+\sqrt{1+(\epsilon h^{-\prime})^2})dx_1.
\end{equation*}
From \eqref{eES} it is straightforward to check that $\int_0^1f_k=0$, so we have the following limit
\begin{equation}\label{eLMS}
\lim_{\epsilon \to 0} MF_{k,\epsilon}=0.
\end{equation}
We introduce the following subspace $E_k=\text{Span}[F_1-MF_{1,\epsilon},...,F_k-MF_{k,\epsilon}]$, we can use this as a test subspace in the variational characterization \eqref{eVFS}, we obtain 
\begin{equation*}
\sigma_k(\Omega_{\epsilon})\leq \max_{v\in E_k} \frac{\int_{\Omega_{\epsilon}}|v|^2dx}{\int_{\partial \Omega_{\epsilon}}v^2 ds}=\max_{\beta\in \mathbb{R}^k}\frac{\epsilon \sum_{i=1}^k\beta_i^2\int_0^1 (f_i')^2hdx_1}{\int_0^1\big(\sum_{i=1}^k \beta_i(f_i-MF_{i,\epsilon})\big)^2((1+(\epsilon h^{+\prime})^2)^{\frac{1}{2}}+(1+(\epsilon h^{-\prime})^2)^{\frac{1}{2}})dx_1}.
\end{equation*}
From \eqref{eLMS} and the above inequality we can conclude that for $\epsilon$ small enough
\begin{equation}\label{eUBS}
\sigma_k(\Omega_{\epsilon})\leq\frac{\epsilon}{2} \max_{\beta\in \mathbb{R}^k}\frac{\sum_{i=1}^k\beta_i^2\int_0^1 (f_i')^2hdx_1}{\sum_{i=1}^k\beta_i^2\int_0^1 f_i^2dx_1}+o(\epsilon)=\frac{\sigma_k(h)}{2}\epsilon+o(\epsilon),
\end{equation} 
where the last equality is true because $f_k$ is the eigenfunction corresponding to $\sigma_k(h)$

On the other hand, let us denote by $\Omega_1$ the convex domain corresponding to $\epsilon=1$.
Let $v_{k,\epsilon}$ be a Steklov eigenfunction associated to  $\sigma_k(\Omega_{\epsilon})$, normalized in such a way that $||v_{k,\epsilon}||_{L^2(\partial \Omega_{\epsilon})}=1$. 
We define the following function
\begin{equation*}
\overline v_{k,\epsilon}(x_1,x_2)=v_{k,\epsilon}(x_1,\epsilon x_2)\quad \forall\; (x_1,x_2)\in \Omega_1. 
\end{equation*}
We start with the bound of $||\nabla \overline v_{k,\epsilon} ||_{L^2(\Omega_1)}$,
\begin{equation*}
\int_{\Omega_1}|\nabla \overline v_{k,\epsilon}|^2 dx\leq \int_{\Omega_1} \Big( \frac{\partial \overline v_{k,\epsilon}}{\partial x_1} \Big )^2+\frac{1}{\epsilon^2} \Big( \frac{\partial \overline v_{k,\epsilon}}{\partial x_2} \Big )^2 dx=\frac{1}{\epsilon}\int_{\Omega_{\epsilon}}|\nabla v_{k,\epsilon}|^2 dy = \frac{\sigma_k(\Omega_\epsilon)}{\epsilon}\leq C
\end{equation*}
where we did the change of coordinates $y_1=x_1$, $y_2=\epsilon x_2$ and the last inequality is true because of \eqref{eUBS}. 
We want now to bound $||\overline v_{k,\epsilon}||_{L^2(\Omega_1)}$. 
By the Poincar\'e-Friedrichs inequality or the variational characterization of Robin eigenvalues (we denote by $\lambda_1^R(\Omega,\beta)$ the first Robin eigenvalue
of the domain $\Omega$ with the boundary parameter $\beta$), we get
\begin{equation}\label{robi}
\int_{\Omega_\epsilon} v_{k,\epsilon}^2 dx \leq \frac{1}{\lambda_1^R(\Omega_\epsilon,1)}\left[\int_{\Omega_\epsilon} |\nabla v_{k,\epsilon}|^2 dx +
\int_{\partial\Omega_\epsilon} v_{k,\epsilon}^2 ds \right].
\end{equation}
Using Bossel's inequality, see \cite{Bos86}, we infer $\lambda_1^R(\Omega_\epsilon,1) \geq h(\Omega_\epsilon)- 1$ where $h(\Omega_\epsilon)$ is the Cheeger constant
of $\Omega_\epsilon$. Now by monotonicity of the Cheeger constant with respect to inclusion, we have $h(\Omega_\epsilon)\geq h(R_\epsilon)$ where $R_\epsilon$
is a rectangle of length 1 and width $4\epsilon$. Now the Cheeger constant of such a rectangle can be computed explicitly, see \cite{KLR06} and it turns out that,
for any $\epsilon$,  $h(R_\epsilon) \geq 2/\epsilon$. Therefore, using \eqref{robi} and the normalization $\int_{\partial\Omega_\epsilon} v_{k,\epsilon}^2 ds =1$ we
finally get 
$$\int_{\Omega_\epsilon} v_{k,\epsilon}^2 dx \leq \epsilon (C\epsilon +1)\leq 2\epsilon.$$
Now, coming back to $\overline v_{k,\epsilon}$, we have
$$\int_{\Omega_1} \overline v_{k,\epsilon}^2 dx=\frac{1}{\epsilon} \int_{\Omega_\epsilon} v_{k,\epsilon}^2 dx \leq 2$$
therefore we conclude that there exists $\overline{V}_k\in H^1(\Omega_1)$ such that (up to a sub-sequence that we still denote by $\overline v_{k,\epsilon}$) 
\begin{equation}\label{eCAS}
\overline v_{k,\epsilon} \rightharpoonup \overline{V}_k \quad \text{in} \quad H^1(\Omega_1), \qquad\mbox{and strongly in $L^2$}.
\end{equation}
We also know that $\overline{V}_k$ does not depend on $x_2$, indeed 
\begin{equation*}
\int_{\Omega_1} \big ( \frac{\partial \overline{v}_{k,\epsilon} }{\partial x_2} \big )^2dx=\epsilon \int_{\Omega_{\epsilon}} \big ( \frac{\partial v_{k,\epsilon} }{\partial x_2} \big )^2dx\leq C \epsilon^2 \rightarrow 0.
\end{equation*} 
We define the function $V_k$ as the restriction of $\overline{V}_k$ to the variable $x_1$. We want to prove that $\int_0^1V_kdx_1=0$ and $V_k$ is not a constant function. By definition of $\overline{v}_{k,\epsilon}$ and $v_{k,\epsilon}$
the following equality holds
\begin{equation*}
0=\int_{\partial\Omega_{\epsilon}}v_{k,\epsilon}ds=\int_0^1\overline{v}_{k,\epsilon}(x_1,h^+(x_1))\sqrt{1+(\epsilon h^{+\prime})^2}dx_1+\int_0^1\overline{v}_{k,\epsilon}(x_1,h^-(x_1))\sqrt{1+(\epsilon h^{-\prime})^2}dx_1.
\end{equation*}
Now, $\overline{v}_{k,\epsilon}$ converges strongly in $L^2$ to $\overline{V}_k$ while $\sqrt{1+(\epsilon h^{+\prime})^2}$ converges weakly in $L^2$ to 1, thus passing to the limit yields 
\begin{equation}\label{eMVS}
\int_0^1 V_kdx_1=0.
\end{equation}
Now from the fact that $||v_{k,\epsilon}||_{L^2(\partial \Omega_{\epsilon})}=1$, using similar arguments we conclude that: 
\begin{equation*}
\int_0^1 V_k^2dx_1=2,
\end{equation*}
from this equality and \eqref{eMVS} we conclude that $V_k$ cannot be a constant function. 

Using the convergence given in \eqref{eCAS}, the variational characterization and the relations that we have just obtained, we conclude that for $\epsilon$ small enough we have the following lower bound
\begin{equation}\label{eLBS}
\sigma_k(\Omega_{\epsilon})=\max_{\beta\in \mathbb{R}^k}\frac{\sum_{i=1}^k\beta_i^2\int_{\Omega_{\epsilon}}|\nabla v_{i,\epsilon}|^2dx}{\sum_{i=1}^k\beta_i^2\int_{\partial \Omega_{\epsilon}}v_{i,\epsilon}^2ds}\geq \frac{\epsilon}{2}\max_{\beta\in \mathbb{R}^k}\frac{\sum_{i=1}^k\beta_i^2 \int_0^1 V_i'^2hdx_1}{\sum_{i=1}^k\beta_i^2\int_0^1V_i^2dx_1}+o(\epsilon)\geq \frac{\sigma_k(h)}{2}\epsilon+o(\epsilon).
\end{equation}
The last inequality is true because of the variational characterization \eqref{eVCS1} for $\sigma_k(h)$. From \eqref{eUBS} and \eqref{eLBS} we finally conclude that 
\begin{equation*}
\sigma_k(\Omega_{\epsilon}) = \frac{\sigma_k(h)}{2}\epsilon+o(\epsilon)\quad \text{as} \quad \epsilon\rightarrow 0,
\end{equation*}
\end{proof}
We now specify the result above in the case of thin domains and we give also some continuity results for $\sigma_k(h)$.

\begin{lemma}\label{lCES}
Let $\Omega_{\epsilon}$ be a sequence of thin domains then Lemma \ref{lAS} holds. Moreover let $h_n\in \mathcal{L}$ and $h\in \mathcal{L}$ be such that $h_n\rightarrow h$ in $L^2(0,1)$, then,  we have
\begin{equation*}
\sigma_k(h_n)\rightarrow \sigma_k(h)
\end{equation*}
\end{lemma} 
\begin{proof}
From the concavity and positivity of $h\in \mathcal{L}$ it follows that there exists a constant $K>0$ such that 
\begin{equation}\label{eLBh}
h(x)\geq Kx(1-x)\quad \text{for a. e.}\quad 0\leq x \leq 1.
\end{equation}
In particular the hypothesis of Lemma \ref{lAS} are satisfied. 

Let $h_n\in \mathcal{L}$ and $h\in \mathcal{L}$ be such that $h_n\rightarrow h$ in $L^2(0,1)$, we define 
\begin{equation*}
v_n(x)=\int_0^1g_n(x,y)f(y)dy\,\,\text{with}\,\,g_n(x,y)=\int^{\min(x,y)}_0\frac{t}{h_n(t)}dt+\int_{\max(x,y)}^1\frac{1-t}{h_n(t)}dt.
\end{equation*}
The aim is to prove that $v_n\rightarrow v$ in $L^2(0,1)$, this, by classical results (see \cite{H06}), will imply the convergence of the spectrum. We know that up to a subsequence $h_n\rightarrow h$ a. e. in $[0,1]$, now using the lower bound \eqref{eLBh} we obtain an upper bound $g_n(x,y)\leq C$, for every $n\in \mathbb{N}$ and for every $(x,y)\in[0,1]\times [0,1]$. We can apply the dominated convergence on the sequence $g_n(x,y)$ and we conclude that $g_n(x,y)\rightarrow g(x,y)$ for every $(x,y)\in[0,1]\times [0,1]$. Similarly we can conclude also that $v_n(x)\rightarrow v(x)$ for every $x\in[0,1]$. Combining this convergence with the uniform bound on $g_n(x,y)$ we can use the dominated convergence to conclude that 
\begin{equation*}
\int_0^1(v_n(x)-v(x))^2dx\rightarrow 0.
\end{equation*}
\end{proof}

We now study the asymptotic behaviour for the Neumann eigenvalues:
\begin{lemma}\label{lAN}
Let $h^+\in  L^{\infty}(0,1) $ and $h^-\in  L^{\infty}(0,1)$ be two non negative functions, we define the following collapsing domains:
\begin{equation*}
\Omega_{\epsilon}=\{(x,y)\in \mathbb{R}^2 \;|\,\, 0\leq x\leq 1, \,\,-\epsilon h^-(x)\leq y\leq \epsilon h^+(x) \}.
\end{equation*}
Let $h=h^++h^-$, if there exist $K>0$ and $p<2$ such that $h(x)\geq K(x(1-x))^p$ a. e. in $(0,1)$, then:
\begin{equation*}
\mu_k(\Omega_{\epsilon})= \mu_k(h)+o(1)\quad \text{as} \quad \epsilon\rightarrow 0,
\end{equation*}
Where $\mu_k(h)$ is the $k-$th non trivial eigenvalue of
\begin{equation}\label{eEN}
\begin{cases}
\vspace{0.3cm}
   -\frac{d}{dx}\big(h(x)\frac{d u}{dx}(x)\big)=\mu(h) h(x)u(x)  \qquad  x\in \big(0,1\big) \\
      h(0)\frac{du}{dx}(0)=h(1)\frac{du}{dx}(1)=0,
\end{cases}
\end{equation}
\end{lemma}
\begin{proof}
Let $f\in L^2(0,1)$, the inverse of the operator $-\frac{1}{h(x)}\frac{d}{dx}\big(h(x)\frac{d u}{dx}\big)$ with the boundary conditions $h(0)u'(0)=h(1)u'(1)=0$ is given by the integral representation:
\begin{equation*}
u(x)=\int_0^1g(x,y)h(y)f(y)dy\,\,\text{with}\,\,g(x,y)=\int^{\min(x,y)}_0\frac{t}{h(t)}dt+\int_{\max(x,y)}^1\frac{1-t}{h(t)}dt.
\end{equation*}
We can adapt the proof of Lemma \ref{lAS} at this integral operator and we conclude that the problem \eqref{eEN} posses a sequence of eigenvalues and eigenfunctions. In particular the eigenvalue $\mu_k(h)$ admit the following variational characterization:
\begin{equation}\label{eVCN1}
\mu_k(h)=\inf_{E_k}\sup_{0\neq v\in E_k} \frac{\int_0^1(v')^2hdx_1}{\int_0^1v^2hdx_1},
\end{equation}
where the infimum is taken over all $k-$dimensional subspaces of $H^1(0,1)$ which are $L^2-$orthogonal to the function $h$.

Let $g_k$ be the eigenfunction associated to the eigenvalue $\mu_k(h)$, we define the function $G_k(x_1,x_2)=g_k(x_1)$ for every $(x_1,x_2)\in \Omega_{\epsilon}$. We define the mean value of the function $G_k$
\begin{equation*}
MG_{k,\epsilon}:=\frac{1}{|\Omega_{\epsilon}|}\int_{\Omega_{\epsilon}}G_kdx=\frac{1}{|\Omega_1|}\int_0^1g_khdx_1.
\end{equation*}

From \eqref{eEN} it is straightforward to check that $\int_0^1g_khdx_1=0$, so we have 
\begin{equation}\label{eLMN}
MG_{k,\epsilon}=0.
\end{equation}
We introduce the following subspace $E_k=\text{Span}[G_1,...,G_k]$, we can use this as a test subspace in the variational characterization \eqref{eVFN}, we obtain 
\begin{equation}\label{eUBN}
\mu_k(\Omega_{\epsilon})\leq \max_{\beta\in \mathbb{R}^k} \frac{\sum_{i=1}^k\beta_i^2\int_{\Omega_{\epsilon}}|\nabla G_i|^2dx}{\sum_{i=1}^k\beta_i^2\int_{\Omega_{\epsilon}}G_i^2 dx}=\max_{\beta\in \mathbb{R}^k}\frac{\sum_{i=1}^k\beta_i^2\int_0^1 (u_1')^2hdx_1}{\sum_{i=1}^k\beta_i^2\int_0^1 u_1^2hdx_1}=\mu_k(h).
\end{equation} 
where the last equality is true by the variational characterization \eqref{eVCN1} for the eigenvalue $\mu_k(h)$.

Let $u_{k,\epsilon}$ be a Neumann eigenfunction associated to $\mu_k(\Omega_{\epsilon})$, normalized in such a way that $||u_{k,\epsilon}||_{L^2(\Omega_{\epsilon})}=1$, we define the following function
\begin{equation*}
\overline{u}_{k,\epsilon}(x_1,x_2)=\epsilon^{\frac{1}{2}}u_{k,\epsilon}(x_1,\epsilon x_2)\  \forall (x_1,x_2)\in \Omega_1. 
\end{equation*}
We start with the bound of $||\nabla \overline{u}_{k,\epsilon} ||_{L^2(\Omega_1)}$,
\begin{equation*}
\int_{\Omega_1}|\nabla \overline{u}_{k,\epsilon}|^2 dx\leq \epsilon \int_{\Omega_1} \Big( \frac{\partial {u}_{k,\epsilon}}{\partial x_1} \Big )^2+\frac{1}{\epsilon^2}  \Big( \frac{\partial u_{k,\epsilon}}{\partial x_2} \Big )^2 dx \leq \int_{\Omega_{\epsilon}}|\nabla u_{k,\epsilon}|^2 dy\leq \mu_k(h)
\end{equation*}
where we did the change of coordinates $y_1=x_1$, $y_2=\epsilon x_2$, using the same change of variable we obtain $||\overline{u}_{\epsilon}||_{L^2(\Omega_1)}=1$.

We conclude that there exists $\overline{U}_k\in H^1(\Omega_1)$ such that (up to a sub-sequence that we still denote by $\overline{u}_{k,\epsilon}$) 
\begin{equation}\label{eCAN}
\overline{u}_{k,\epsilon} \rightharpoonup \overline{U}_k \quad \text{in} \quad H^1(\Omega_1) \quad \mbox{ and strongly in $L^2$}.
\end{equation}
We also know that $\overline{U}_k$ does not depend on $x_2$, indeed 
\begin{equation*}
\int_{\Omega_1} \big ( \frac{\partial \overline{U}_k}{\partial x_2} \big )^2dx\leq \liminf \int_{\Omega_1} \big ( \frac{\partial \overline{u}_{k,\epsilon} }{\partial x_2} \big )^2dx=
\liminf \epsilon^2 \int_{\Omega_{\epsilon}} \big ( \frac{\partial u_{k,\epsilon} }{\partial x_2} \big )^2dx =  0.
\end{equation*} 
We define the function $U_k$ that is the restriction of $\overline{U}_k$ to the variable $x_1$. We want to prove that $\int_0^1U_khdx_1=0$ and $U_k$ is not a constant function. By definition of $\overline{u}_{k,\epsilon}$ and $u_{k,\epsilon}$
the following equality holds
\begin{equation*}
\int_{\Omega_1}\overline{u}_{k,\epsilon}dx=\frac{1}{\epsilon^{\frac{1}{2}}}\int_{\Omega_{\epsilon}}u_{k,\epsilon}=0\quad \forall \epsilon
\end{equation*}
From the convergence results \eqref{eCAN} we know that, up to a subsequence, $\overline{u}_{k,\epsilon}$ converge a. e. to $\overline{U}_k$ so passing to the limit as $\epsilon$ goes to zero in the above equality we conclude that
\begin{equation}\label{eMVN}
\int_0^1 U_khdx_1=0.
\end{equation}
Now from the fact that $||\overline{u}_{k,\epsilon}||_{L^2(\partial \Omega_1)}=1$, using similar arguments we conclude that: 
\begin{equation*}
\int_0^1 U_k^2hdx_1=1,
\end{equation*}
from this equality \eqref{eMVN} and the fact that $\int_0^1h=1$ we conclude that $U$ cannot be a constant function. 

Using the convergence given in \eqref{eCAN} and the relations that we have just obtained, we conclude that for $\epsilon$ small enough we have the following lower bound
\begin{equation}\label{eLBN}
\mu_k(\Omega_{\epsilon})=\max_{\beta\in \mathbb{R}^k}\frac{\sum_{i=1}^k\beta_i^2\int_{\Omega_{\epsilon}}|\nabla u_{i,\epsilon}|^2dx}{\sum_{i=1}^k\beta_i^2\int_{\Omega_{\epsilon}}u_{i,\epsilon}^2ds}\geq\max_{\beta\in \mathbb{R}^k} \frac{\sum_{i=1}^k\beta_i^2\int_0^1(U_i')^2hdx_1}{\sum_{i=1}^k\beta_i^2\int_0^1U_i^2hdx_1}+o(1)\geq \mu_k(h)+o(1).
\end{equation}
The last inequality is true because because of the variational characterization \eqref{eVCN1} for $\mu_k(h)$. From \eqref{eUBN} and \eqref{eLBN} we finally conclude that 
\begin{equation*}
\mu_k(\Omega_{\epsilon})=  \mu_k(h)+o(1)\quad \text{as} \quad \epsilon\rightarrow 0,
\end{equation*}
\end{proof}

As we did for the Steklov eigenvalues, we now specify the result above in the case of thin domains and we give also some continuity results for $\mu_k(h)$.
\begin{lemma}\label{lCEN}
Let $\Omega_{\epsilon}$ be a sequence of thin domains then Lemma \ref{lAN} holds. Moreover let $h_n\in \mathcal{L}$ and $h\in \mathcal{L}$ be such that $h_n\rightarrow h$ in $L^2(0,1)$, then we have
\begin{equation*}
\mu_k(h_n)\rightarrow \mu_k(h)
\end{equation*}
\end{lemma} 
\begin{proof}
Let $f\in L^2(0,1)$, the inverse of the operator $-\frac{1}{h(x)}\frac{d}{dx}\big(h(x)\frac{d u}{dx}\big)$ with the boundary conditions $h(0)u'(0)=h(1)u'(1)=0$ is given by the integral representation:
\begin{equation*}
u(x)=\int_0^1g(x,y)h(y)f(y)dy\,\,\text{with}\,\,g(x,y)=\int^{\min(x,y)}_0\frac{t}{h(t)}dt+\int_{\max(x,y)}^1\frac{1-t}{h(t)}dt.
\end{equation*}
The proof is a straightforward adaptation of the proof of Lemma \ref{lCES} at this integral operator.  
\end{proof}

\begin{remark}\label{rCG}
We can consider the most general class of collapsing thin domains given by the following parametrization:
\begin{equation*}
\Omega_{\epsilon}=\{(x,y)\in \mathbb{R}^2 \;|\,\, 0\leq x\leq 1, \,\,-g^-(\epsilon) h^-(x)\leq y\leq g^+(\epsilon) h^+(x) \}.
\end{equation*}
Where $h^+\in  L^{\infty}(0,1) $ and $h^-\in  L^{\infty}(0,1)$ are two non negative functions that satisfy the conditions in Lemma \ref{lAS} and Lemma \ref{lAN} and $g^-(\epsilon)$, $g^+(\epsilon)$ are positive functions that go to zero when $\epsilon$ goes to zero. We define the following limit 
\begin{equation*}
\lim_{\epsilon \to 0}\frac{g^-(\epsilon)}{g^+(\epsilon)}=K<+\infty,
\end{equation*}
(if the limit above is $+\infty$ we consider the inverse and in what follows we replace $g^+(\epsilon)$ with $g^-(\epsilon)$). In this case the asymptotics of the eigenvalues $\sigma_k(\Omega_{\epsilon})$ and $\mu_k(\Omega_{\epsilon})$ become:
\begin{align*}
\sigma_k(\Omega_{\epsilon})&\sim \frac{\sigma_k(h^++Kh^-)}{2}g^+(\epsilon)+o(g^+(\epsilon))\quad \text{as} \quad \epsilon\rightarrow 0\\
\mu_k(\Omega_{\epsilon})&\sim \mu_k(h^++Kh^-)+o(1)\quad \text{as} \quad \epsilon\rightarrow 0.
\end{align*}
The proof of this asymptotics use the same arguments of the proofs of Lemma \ref{lAS} and Lemma \ref{lAN}. We prefer to give the statements and the proofs for $g^+(\epsilon)=g^-(\epsilon)=\epsilon$ in order to simplify the exposition and also because this kind of generality is not needed to study the asymptotic behaviour of $F(\Omega_{\epsilon})$.
\end{remark}
\subsection{Study of the asymptotic behaviour of $F(\Omega_{\epsilon})$}
The proof of Theorem \ref{tAF} immediately follows from the above results 
\begin{proof}[Proof of Theorem \ref{tAF}]
Without loss of generality we can rescale the sequence $\Omega_{\epsilon}$ in such a way that $D(\Omega_{\epsilon})=1$. we consider the sequence $F(\Omega_{\epsilon})$, from Lemma \ref{lAS} and Lemma \ref{lAN} we obtain the desired result by sending $\epsilon$ to zero.
\end{proof}

Let $h\in \mathcal{L}$, by Theorem \ref{tAF}, the functional
\begin{equation*}
F(h)=\frac{\mu_1(h)\int_0^1h(x)dx}{\sigma_1(h)}
\end{equation*}
describes the behaviour of the functional $F(\Omega_{\epsilon})$, when $\Omega_{\epsilon}$ is a sequence of thin domains that converges to a segment in the Hausdorff sense. We want to study the problem of finding in which way a sequence of thin domains $\Omega_{\epsilon}$ must collapse in order to obtain the lowest possible value of the limit $F(\Omega_{\epsilon})$. For this reason we prove the following theorem:
\begin{theorem}\label{tSM}
The minimization problem (resp. the maximization problem)
\begin{equation}\label{eSM}
\inf \{F(h):h\in \mathcal{L} \}, \quad (\mbox{ resp.}\,\,  \sup \{F(h):h\in \mathcal{L} \})
\end{equation}
has a solution, moreover the constant function $h\equiv 1$ is a local minimizer.
\end{theorem}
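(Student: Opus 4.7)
The proof splits naturally into existence of extremizers and local minimality of $h\equiv 1$.

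For existence I would apply the direct method. Take a minimizing (resp.\ maximizing) sequence $h_n \in \mathcal{L}$. By Lemma \ref{lCL}, up to a subsequence $h_n \to h^*$ in $L^2(0,1)$ for some $h^* \in \mathcal{L}$ (non-negativity and concavity pass to the limit, and the normalization $\int_0^1 h^*=1$ is preserved under $L^2$-convergence). By Lemma \ref{lEES} and the analogous continuity statement for $\mu_1$, one gets $\sigma_1(h_n) \to \sigma_1(h^*)$ and $\mu_1(h_n) \to \mu_1(h^*)$, so $F(h_n) \to F(h^*)$ and $h^*$ attains the extremum. The maximization case additionally requires $F$ to be bounded above on $\mathcal{L}$, which follows from the estimate $F(h)\le 4$ of Theorem \ref{tULBG} (whose proof does not invoke Theorem \ref{tSM}).

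For local minimality, at $h\equiv 1$ both problems \eqref{eES} and \eqref{eEN} reduce to $-v''=\lambda v$ on $(0,1)$ with Neumann data, so $\mu_1(1)=\sigma_1(1)=\pi^2$ with common $L^2$-normalized eigenfunction $w(x)=\sqrt{2}\cos(\pi x)$, and hence $F(1)=1$. To compute the Gateaux derivative $F'(1)[\phi]$ in a direction $\phi\in L^\infty(0,1)$ with $\int_0^1\phi=0$, I would use the Hadamard-type perturbation formulas
\[
\sigma_1'(1)[\phi]=\int_0^1(w')^2\phi\,dx,\qquad \mu_1'(1)[\phi]=\int_0^1(w')^2\phi\,dx-\pi^2\int_0^1 w^2\phi\,dx.
\]
Substituting $(w')^2=2\pi^2\sin^2(\pi x)$, $w^2=2\cos^2(\pi x)$ and using $\int\phi=0$ gives the clean expression
\[
F'(1)[\phi]=\frac{\mu_1'(1)[\phi]-\sigma_1'(1)[\phi]}{\pi^2}=-\int_0^1\cos(2\pi x)\,\phi(x)\,dx.
\]

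The admissible cone of one-sided perturbations preserving $\mathcal{L}$ at $h\equiv 1$ consists of concave $\phi$ with $\int\phi=0$. The crucial step is then the claim that for every such $\phi$, $\int_0^1\cos(2\pi x)\phi(x)\,dx\le 0$, with equality iff $\phi$ is affine. To prove it, decompose $\phi=\ell+\psi$ with $\ell$ the affine interpolant of $\phi(0),\phi(1)$, so that $\psi$ is concave with $\psi(0)=\psi(1)=0$. The affine part contributes $0$ since $\int_0^1\cos(2\pi x)\,x\,dx=\int_0^1\cos(2\pi x)(1-x)\,dx=0$. For the $\psi$-part, represent $\psi(x)=\int_0^1 G(x,y)f(y)\,dy$ with $f=-\psi''\ge 0$ (a non-negative measure) and $G(x,y)=\min(x,y)\bigl(1-\max(x,y)\bigr)$ the Dirichlet Green's function of $-d^2/dx^2$; Fubini reduces the inequality to the pointwise identity $\int_0^1\cos(2\pi x)G(x,y)\,dx=(\cos(2\pi y)-1)/(4\pi^2)\le 0$, which follows from a short piecewise integration.

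This gives $F'(1)[\phi]\ge 0$ with strict inequality unless $\phi(x)=c(1-2x)$ is affine. On this residual one-dimensional direction the symmetry $x\mapsto 1-x$ leaves $\mu_1$, $\sigma_1$ and $\int h$ invariant, so $t\mapsto F(1+t\phi)$ is even in $t$ and admits an expansion $F(1+t\phi)=1+Ct^2+O(t^3)$. I expect the main technical obstacle of the argument to be verifying $C\ge 0$: the eigenvalue problems with linear weight reduce to Bessel-type equations with no elementary closed form, so one must either run a second-order perturbation expansion against the full $h=1$ spectral basis, or use an ad-hoc comparison of the two spectra along the affine family. Granted this last step, combining it with the strict first-order sign on non-affine directions yields that $h\equiv 1$ is a (strict) local minimizer of $F$ on $\mathcal{L}$ equipped with the $L^\infty$ topology.
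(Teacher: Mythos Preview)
Your existence argument and first-variation computation match the paper's approach; the identification of $F'(1)[\phi]=-\int_0^1\phi(x)\cos(2\pi x)\,dx$ and its sign on concave perturbations are the same, with the minor difference that the paper perturbs by $\phi\in\mathcal{L}$ rather than mean-zero $\phi$ (harmless by scale invariance of $F$) and simply cites a reference for the Fourier-coefficient inequality, while you supply a self-contained Green's-function proof, which is a nice touch.

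The genuine gap is exactly where you flag it, but your assessment of its difficulty is off. You do \emph{not} need to solve the eigenvalue problems with linear weight $1+tAx$ at nonzero $t$ (those indeed lead to Bessel-type equations). For the second derivative at $t=0$ you only need the first-order eigenfunction corrections $\dot u_{Ax}$ and $\dot v_{Ax}$, and these satisfy inhomogeneous linear ODEs with \emph{constant} coefficients and trigonometric right-hand sides, of the shape
\[
-\dot w''-\pi^2\dot w = p(x)\cos(\pi x)+q(x)\sin(\pi x),\qquad \dot w'(0)=\dot w'(1)=0,
\]
with $p,q$ polynomials of degree at most one, together with a normalization condition. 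These are solved in closed form by variation of parameters; the solutions are elementary combinations of $\cos(\pi x)$, $\sin(\pi x)$, $x\cos(\pi x)$, $x\sin(\pi x)$, $x^2\sin(\pi x)$. The paper carries this out explicitly and obtains $\ddot\mu_{Ax}=\tfrac{3}{2}A^2$ and $\ddot\sigma_{Ax}=\tfrac{A^2}{8}(3-\pi^2)$, hence
\[
\frac{d^2}{dt^2}F(1+tAx)\Big|_{t=0}=\frac{A^2(9+\pi^2)}{8\pi^2}>0.
\]
Your evenness observation via the symmetry $x\mapsto 1-x$ is correct but cannot by itself decide the sign of the quadratic coefficient; note also that you need $C>0$ strictly, not merely $C\ge 0$, to conclude local minimality along that direction. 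So the missing step is a routine computation rather than a real obstacle---just carry out the first-order perturbation of the eigenfunctions.
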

\begin{proof}
The existence of the minimizer or the maximizer follows directly from the compactness result given in Lemma \ref{lCL}, the continuity results given in Lemma \ref{lCES} and Lemma \ref{lCEN}.

The proof of the fact that $h\equiv 1$ is a local minimizer is divided in two steps where we use first and second derivative respectively.
In the first step, using the first derivative, we prove that $h\equiv 1$ satisfies a first order optimality condition and in the second step,
using second derivative, we prove that it also satisfies the second order optimality condition. 
First of all, we recall that the eigenvalues $\mu_{0,\phi}$ and $\sigma_{0,\phi}$, being the eigenvalues of a Sturm-Liouville problem, are simple eigenvalues, see e.g. \cite[chapter 5]{EgKo}.  In particular they are twice differentiable. 
Before we start the proof we fix the notation, we consider $t>0$ a positive number, and we define the following derivatives:
\begin{itemize}
\item  for every $\phi\in \mathcal{L}$ we define $\mu_{t,\phi}:=\mu_1(1+t\phi)$ and we denote by $u_{t,\phi}$ the corresponding eigenfunction. We use the following notation for the derivatives of the eigenvalues:
\begin{equation*}
\dot{\mu}_{\phi}:=\frac{d}{dt}\mu_1(1+t\phi)\Big |_{t=0}\quad \ddot{\mu}_{\phi}:=\frac{d^2}{dt^2}\mu_1(1+t\phi)\Big |_{t=0},
\end{equation*} 
and the following notation for the derivative of the eigenfunctions:
\begin{equation*}
\dot{u}_{\phi}:=\frac{d}{dt}u_{t,\phi}\Big |_{t=0}\quad \ddot{u}_{\phi}:=\frac{d^2}{dt^2}u_{t,\phi}\Big |_{t=0}.
\end{equation*} 
\item for every $\phi\in \mathcal{L}$ we define $\sigma_{t,\phi}:=\sigma_1(1+t\phi)$ and we denote by $v_{t,\phi}$ the corresponding eigenfunction. We use the following notation for the derivatives of the eigenvalues:
\begin{equation*}
\dot{\sigma}_{\phi}:=\frac{d}{dt}\sigma_1(1+t\phi)\Big |_{t=0}\quad \ddot{\sigma}_{\phi}:=\frac{d^2}{dt^2}\sigma_1(1+t\phi)\Big |_{t=0},
\end{equation*} 
and the following notation for the derivative of the eigenfunctions:
\begin{equation*}
\dot{v}_{\phi}:=\frac{d}{dt}v_{t,\phi}\Big |_{t=0}\quad \ddot{v}_{\phi}:=\frac{d^2}{dt^2}v_{t,\phi}\Big |_{t=0}.
\end{equation*} 
\end{itemize}
We notice that 
\begin{equation}\label{eU0}
\mu_{0,\phi}=\sigma_{0,\phi}=\pi^2\quad \text{and}\quad u_{0,\phi}(x)=v_{0,\phi}(x)=\sqrt{2}\cos(\pi x)
\end{equation}

\noindent{\bf Step 1.} We start by proving the following inequality
\begin{equation*}
\frac{d}{dt}F(1+t\phi)\Big |_{t=0}\geq 0\quad \forall\, \phi\in \mathcal{L}.
\end{equation*}
The derivative of $F(h)$ has the following expression
\begin{equation}\label{eGD}
\frac{d}{dt}F(1+t\phi)\Big |_{t=0}=\frac{\dot{\mu}_{\phi}}{\pi^2}+\int_0^1\phi dx-\frac{\dot{\sigma}_{\phi}}{\pi^2}.
\end{equation}
Since this kind of perturbation is classical,  see e.g. \cite[section 5.7]{HPb} we just perform a formal computation here, the complete justification would involve
an implicit function theorem together with Fredholm alternative.
We start by computing $\dot{\sigma}_{\phi}$, from \eqref{eES} we know that
\begin{equation*}
\frac{d}{dt}\Big [ -\frac{d}{dx}\big((1+t\phi)\frac{d v_{t,\phi}}{dx}\big)\Big ]\Big |_{t=0}=\frac{d}{dt}[\sigma_{t,\phi} v_{t,\phi}]\Big |_{t=0},
\end{equation*}
so we obtain the following differential equation satisfied by $\dot{v}_{\phi}$
\begin{equation}\label{eVD}
-(\phi'v_{0,\phi}'+\phi v_{0,\phi}''+\dot{v}_{\phi}'')=\dot{\sigma}_{\phi}v_{0,\phi}+\sigma_{0,\phi}\dot{v}_{\phi}.
\end{equation}
Multiplying both side of the above equation by $v_{0,\phi}$ and integrating, recalling \eqref{eU0}, we obtain 
\begin{equation}\label{eSD}
\dot{\sigma}_{\phi}=2\pi^2\int_0^1\phi\sin^2(\pi x)dx.
\end{equation} 
We now compute $\dot{\mu}_{\phi}$, from \eqref{eEN} we know that 
\begin{equation*}
\frac{d}{dt}\Big [ -\frac{d}{dx}\big((1+t\phi)\frac{d u_{t,\phi}}{dx}\big)\Big ]\Big |_{t=0}=\frac{d}{dt}[\mu_{t,\phi}(1+t\phi) u_{t,\phi}]\Big |_{t=0},
\end{equation*}
so we obtain the following differential equation satisfied by $\dot{u}_{\phi}$
\begin{equation}\label{eUD}
-(\phi'u_{0,\phi}'+\dot{u}_{\phi}'')=\dot{\mu}_{\phi}u_{0,\phi}+\mu_{0,\phi}\dot{u}_{\phi}.
\end{equation}
Multiplying both side of the above equation by $u_{0,\phi}$ and integrating, recalling \eqref{eU0}, we obtain 
\begin{equation}\label{eND}
\dot{\mu}_{\phi}=2\pi^2\int_0^1\phi(\sin^2(\pi x)-\cos^2(\pi x))dx.
\end{equation}
Using the explicit formulas given by \eqref{eSD} and \eqref{eND} in \eqref{eGD} we finally obtain 
\begin{equation*}
\frac{d}{dt}F(1+t\phi)\Big |_{t=0}=-\int_0^1\phi \cos(2\pi x)dx \quad \forall\, \phi\in \mathcal{L}.
\end{equation*}
Now it is well known (see \cite{W85}) that the first cosine Fourier coefficent of a concave function is non positive. Moreover it is easy to check that if 
$\phi\in \mathcal{L}$ then $\int_0^1\phi \cos(2\pi x)dx=0$ if and only if $\phi$ is a linear function. So we have two cases 
\begin{enumerate}[i]
\item The function $\phi\in \mathcal{L}$ is not a linear function. In this case 
\begin{equation*}
\frac{d}{dt}F(1+t\phi)\Big |_{t=0}> 0
\end{equation*}
and we conclude that $h\equiv 1$ is a local minimizer for this kind of perturbation. 
\item The function $\phi$ is of the form $\phi(x)=B+Ax$, in this case 
\begin{equation*}
\frac{d}{dt}F(1+t(B+Ax))\Big |_{t=0}=0.
\end{equation*}
\end{enumerate}
In order to conclude the proof we need to study the second variation of the functional $F(h)$ for perturbation of the form $\phi(x)=B+Ax$.

\noindent{\bf Step 2.} Given two real numbers $(A,B)\in \mathbb{R}^2\setminus (0,0)$, we want to prove that 
\begin{equation}\label{eSVG}
\frac{d^2}{dt^2}F(1+t(B+Ax))\Big |_{t=0}>0.
\end{equation}
We start by noticing that for every $k\in \mathbb{R}$ different from zero we have that $F(kh)=F(h)$, so in order to prove inequality \eqref{eSVG} it is enough to prove that
\begin{equation}
\frac{d^2}{dt^2}F(1+tAx)\Big |_{t=0}>0.
\end{equation}
This second derivative has the following expression
\begin{equation}\label{eGDD}
\frac{d^2}{dt^2}F(1+tAx)\Big |_{t=0}=\frac{\ddot{\mu}_{Ax}}{\pi^2}+\frac{\dot{\mu}_{Ax}A}{\pi^2}-\frac{\dot{\sigma}_{Ax}A}{\pi^2}-\frac{2\dot{\sigma}_{Ax}\dot{\mu}_{Ax}}{\pi^4}-\frac{\ddot{\sigma}_{Ax}}{\pi^2}+\frac{2\dot{\sigma}_{Ax}^2}{\pi^4}.
\end{equation}
From \eqref{eSD} and \eqref{eND} it is easy to check that:
\begin{equation}\label{eED}
\dot{\mu}_{Ax}=0\quad \text{and}\quad \dot{\sigma}_{Ax}=\frac{A\pi^2}{2}.
\end{equation}
We start by computing $\ddot{\sigma}_{Ax}$, from \eqref{eES} we know that
\begin{equation*}
\frac{d^2}{dt^2}\Big [ -\frac{d}{dx}\big((1+tAx)\frac{d v_{t,Ax}}{dx}\big)\Big ]\Big |_{t=0}=\frac{d^2}{dt^2}[\sigma_{t,Ax} v_{t,Ax}]\Big |_{t=0}.
\end{equation*}
After a similar computation as the one we did in order to compute $\dot{\sigma}_{\phi}$ we obtain 
\begin{equation}\label{eSDD}
\ddot{\sigma}_{Ax}=2\int_0^1Ax\dot{v}_{Ax}'v_{0,Ax}'-\dot{\sigma}_{Ax}\dot{v}_{Ax}v_{0,Ax}dx.
\end{equation}
Now we have to find the function $\dot{v}_{Ax}$ and then compute the integral above. From \eqref{eVD}, \eqref{eU0} and \eqref{eED} we can conclude that $\dot{v}_{Ax}$ satisfies the following differential equation
\begin{equation*}
-\dot{v}_{Ax}''(x)-\pi^2\dot{v}_{Ax}(x)=\big (\frac{A\pi^2}{\sqrt{2}}-Ax\sqrt{2}\pi^2 \big)\cos(\pi x)-A\sqrt{2}\pi\sin(\pi x).
\end{equation*}
We are free to choose a normalization for the eigenfunctions of the problem \eqref{eES}, so we can assume that, for every $t$, we have $\int_0^1v_{t,Ax}^2dx=1$. From this we conclude that:
\begin{equation*}
2\int_0^1\dot{v}_{Ax}v_{0,Ax}dx=\frac{d}{dt}\Big [\int_0^1v_{t,Ax}^2dx=1\Big ]\Big |_{t=0}=0.
\end{equation*}
From the boundary conditions of the problem \eqref{eES} we obtain the following boundary conditions for $\dot{v}_{Ax}$  
\begin{align*}
\dot{v}_{Ax}'(0)&=\frac{d}{dt}\Big [v_{t,Ax}'(0) \Big ]\Big |_{t=0}=0\\
\dot{v}_{Ax}'(1)&=\frac{d}{dt}\Big [(1+tA)v_{t,Ax}'(1) \Big ]\Big |_{t=0}=0.
\end{align*} 
We finally obtain that $\dot{v}_{Ax}$ must satisfy

\begin{equation*}
\begin{cases}
\vspace{0.3cm}
   -\dot{v}_{Ax}''(x)-\pi^2\dot{v}_{Ax}(x)=\big (\frac{A\pi^2}{\sqrt{2}}-Ax\sqrt{2}\pi^2 \big)\cos(\pi x)-A\sqrt{2}\pi\sin(\pi x)  \qquad  x\in \big(0,1\big) \\
\vspace{0.3cm}
\dot{v}_{Ax}'(0)=\dot{v}_{Ax}'(1)=0 \\
\int_0^1\dot{v}_{Ax}v_{0,Ax}dx=0.
\end{cases}
\end{equation*}
This problem admits a unique solution given by the following function:
\begin{equation}\label{eSVD}
\dot{v}_{Ax}(x)=\big ( \frac{A}{4\sqrt{2}}- \frac{A}{2\sqrt{2}}x\big )\cos(\pi x)+\big ( \frac{A}{2\sqrt{2}\pi}+ \frac{A\pi}{2\sqrt{2}}(x^2-x)\big )\sin(\pi x).
\end{equation}
Putting the expressions given by \eqref{eU0} and \eqref{eSVD} in the formula \eqref{eSDD} we finally obtain 
\begin{equation}\label{eSDDF}
\ddot{\sigma}_{Ax}=\frac{A^2}{8}(3-\pi^2).
\end{equation}
We now compute $\ddot{\mu}_{Ax}$, from \eqref{eEN} we know that
\begin{equation*}
\frac{d^2}{dt^2}\Big [ -\frac{d}{dx}\big((1+tAx)\frac{d u_{t,Ax}}{dx}\big)\Big ]\Big |_{t=0}=\frac{d}{dt}[\mu_{t,Ax}(1+tAx) u_{t,Ax}]\Big |_{t=0},
\end{equation*}
After a similar computation as the one we did in order to compute $\dot{\mu}_{\phi}$ we obtain 
\begin{equation}\label{eNDD}
\ddot{\mu}_{Ax}=2\int_0^1Ax(\dot{u}_{Ax}'u_{0,Ax}'-\pi^2\dot{u}_{Ax}u_{0,Ax})dx.
\end{equation}
Now we have to find the function $\dot{u}_{Ax}$ and then compute the integral above. From \eqref{eND}, \eqref{eU0} and \eqref{eED} we can conclude that $\dot{u}_{Ax}$ must satisfy the following differential equation
\begin{equation*}
-\dot{u}_{Ax}''(x)-\pi^2\dot{u}_{Ax}(x)=-A\sqrt{2}\pi\sin(\pi x).
\end{equation*}
We are free to choose a normalization for the eigenfunction of the problem \eqref{eEN}, so we can assume that for every $t$ we have $\int_0^1(1+tAx)u_{t,Ax}^2dx=1$, by differentiating with respect to t this relation and computing the derivative at zero we conclude that 
\begin{equation*}
\int_0^1\dot{u}_{Ax}u_{0,Ax}dx=A\int_0^1x\cos(\pi x)dx.
\end{equation*}
Using the same argument as above for the boundary conditions for $\dot{u}_{Ax}$ we can conclude that $\dot{u}_{Ax}$ must satisfy 
\begin{equation*}
\begin{cases}
\vspace{0.3cm}
   -\dot{u}_{Ax}''(x)-\pi^2\dot{u}_{Ax}(x)=-A\sqrt{2}\pi\sin(\pi x) \qquad  x\in \big(0,1\big) \\
\vspace{0.3cm}
\dot{u}_{Ax}'(0)=\dot{u}_{Ax}'(1)=0 \\
\int_0^1\dot{u}_{Ax}u_{0,Ax}dx=A\int_0^1x\cos(\pi x)dx.
\end{cases}
\end{equation*}
This problem admits a unique solution given by the following function:
\begin{equation}\label{eSUD}
\dot{u}_{Ax}(x)=\frac{A}{\sqrt{2}}\big (\frac{1}{\pi}\sin(\pi x)-x\cos(\pi x) \big )
\end{equation}
Putting the expressions given by \eqref{eU0} and \eqref{eSUD} in the formula \eqref{eNDD} we finally obtain 
\begin{equation}\label{eNDDF}
\ddot{\mu}_{Ax}=\frac{3}{2}A^2.
\end{equation}
Finally putting \eqref{eSDDF}, \eqref{eNDDF} and \eqref{eED} inside \eqref{eGDD} we obtain 
\begin{equation}
\frac{d^2}{dt^2}F(1+tAx)\Big |_{t=0}=\frac{A^2(9+\pi^2)}{8\pi^2}>0
\end{equation}
This concludes the proof.
\end{proof}

\section{Convex case: upper and lower bounds for $F(h)$ and $F(\Omega)$}\label{sULB}
In this section we prove Theorem \ref{tULBG} and Theorem \ref{tULBF}. For every $0<x_0<1$ we define the following triangular 
shape function 
\begin{equation*}
T_{x_0}=\begin{cases}
\vspace{0.3cm}
   \frac{x}{x_0} \qquad  x\in \big[0,x_0] \\

\frac{1-x}{1-x_0} \qquad x\in \big[x_0,1].
\end{cases}
\end{equation*} 

Before proving Theorem \ref{tULBG} let us state the following Lemma, that will be crucial in the proof of the upper bound for $F(h)$
\begin{lemma}\label{lET}
For every $0<x_0<1$ the following equality holds
\begin{equation*}
\frac{\mu_1(T_{x_0})}{\sigma_1(T_{x_0})}=4.
\end{equation*}
\end{lemma}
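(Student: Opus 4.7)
The plan is to solve the Sturm-Liouville eigenvalue problems \eqref{eES} and \eqref{eEN} with weight $h=T_{x_0}$ explicitly in terms of Bessel functions on each of the two linear pieces, and to observe that the two resulting transcendental equations for $\sigma_1(T_{x_0})$ and $\mu_1(T_{x_0})$ become literally the same equation after the rescaling $\sqrt{\mu}\leftrightarrow 2\sqrt{\sigma}$.

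On the left piece $(0,x_0)$, where $h(x)=x/x_0$, the Neumann ODE $-(hu')'=\mu h u$ reduces to $xu''+u'+\mu x u=0$, which is Bessel's equation of order $0$ in the variable $t=\sqrt{\mu}\,x$; regularity at the degenerate endpoint $x=0$ forces the $Y_0$ branch to vanish, giving $u_L(x)=c_1 J_0(\sqrt{\mu}\,x)$. On the right piece, the mirror computation yields $u_R(x)=d_1 J_0(\sqrt{\mu}(1-x))$. For the Steklov equation $-(hv')'=\sigma v$ on the left, one gets $(xv')'+\sigma x_0 v=0$, which under the substitution $t=2\sqrt{\sigma x_0 x}$ is again Bessel of order $0$; regularity selects $v_L(x)=a_1 J_0(2\sqrt{\sigma x_0 x})$, and symmetrically $v_R(x)=b_1 J_0(2\sqrt{\sigma(1-x_0)(1-x)})$.

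At the vertex $x=x_0$ one has $h(x_0)=1$, so the matching conditions reduce to continuity of the function and of its first derivative. Using $J_0'=-J_1$ and eliminating the coefficients $(c_1,d_1)$ and $(a_1,b_1)$ produces the characteristic equations
$$J_0(\sqrt{\mu}\,x_0)\,J_1(\sqrt{\mu}(1-x_0))+J_1(\sqrt{\mu}\,x_0)\,J_0(\sqrt{\mu}(1-x_0))=0$$
for the Neumann spectrum and
$$J_0(2\sqrt{\sigma}\,x_0)\,J_1(2\sqrt{\sigma}(1-x_0))+J_1(2\sqrt{\sigma}\,x_0)\,J_0(2\sqrt{\sigma}(1-x_0))=0$$
for the Steklov spectrum. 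Setting $\tau=\sqrt{\mu}$ in the first and $\tau=2\sqrt{\sigma}$ in the second, both equations collapse to one and the same condition on $\tau$. Since $\mu_1(T_{x_0})$ and $\sigma_1(T_{x_0})$ are the smallest positive eigenvalues, they both correspond to the smallest positive root $\tau_1>0$ of this common transcendental equation (the root $\tau=0$ yielding the constant eigenfunction), whence $\sqrt{\mu_1(T_{x_0})}=\tau_1=2\sqrt{\sigma_1(T_{x_0})}$ and therefore $\mu_1(T_{x_0})/\sigma_1(T_{x_0})=4$.

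The only technical point to justify carefully is the discarding of the $Y_0$ branch at the endpoints $x=0$ and $x=1$. This follows from the integral representation used in the proof of Lemma \ref{lEES}: on $T_{x_0}$ the kernel $g(x,y)$ is bounded because the integrals $\int_0^x t/h(t)\,dt$ and $\int_x^1 (1-t)/h(t)\,dt$ are finite, so eigenfunctions are continuous on the closed interval $[0,1]$, which is incompatible with the logarithmic singularity of $Y_0$ at the origin.
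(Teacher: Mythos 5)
Your proof is correct and follows essentially the same route as the paper: solve each weighted Sturm--Liouville problem piecewise in terms of $J_0$ (discarding $Y_0$ at the degenerate endpoints), match function and derivative at $x_0$, and observe that the two transcendental characteristic equations coincide under $\sqrt{\mu}\leftrightarrow 2\sqrt{\sigma}$. The only cosmetic difference is that you justify dropping $Y_0$ via boundedness of the Green's kernel rather than via the boundary condition $h\,v'\to 0$ (using $uY_0'(u)\to 2/\pi$), and both justifications are valid.
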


\begin{proof}
We want to compute the eigenvalue $\sigma_1(T_{x_0})$, we introduce the parameter $\sigma$ and we want to find a function $v\in C^1(0,1)$ such that 
\begin{equation}\label{eDEV}
\begin{cases}
\vspace{0.3cm}
   xv''(x)+v'(x)+x_0\sigma v(x)=0 \qquad  x\in \big[0,x_0] \\

(1-x)v''(x)-v'(x)+(1-x_0)\sigma v(x)=0 \qquad x\in \big[x_0,1].
\end{cases}
\end{equation}
%Notice that we don't have any information on the extremals of the interval because $T_{x_0}(0)=T_{x_0}(1)=0$. 
The idea will be to solve the equation first on the interval $\big[0,x_0]$ then on the interval $\big[x_0,1]$ and then find the condition on the parameter $\sigma$ in order to 
have a good matching in the point $x_0$. Let $J_0, Y_0$ be the Bessel functions of the first and second kind respectively with parameter $0$, we start by noticing that all 
the solutions of the second order ODE \eqref{eDEV} (1st line) are given in the interval $\big[0,x_0]$ by
\begin{equation*}
v_l=C_1J_0(2\sqrt{ \sigma x_0 x}) + \hat{C}_1 Y_0(2\sqrt{ \sigma x_0 x})
\end{equation*} 
Now, since $uY'_0(u) \to 2/\pi$ when $u\to 0$ we see that, in order the boundary condition $T_{x_0}(x)v'_l(x) \to 0$ be satisfied, we must choose $\hat{C}_1=0$.
Using the change of variable $y=1-x$ is straightforward to check that, the solution of \eqref{eDEV} (2nd line) is given in the interval $\big[x_0,1]$.  by
\begin{equation*}
v_r=C_2J_0(2\sqrt{\sigma (1-x_0)(1-x)})
\end{equation*} 
Now, we impose the following matching condition $v_l(x_0)=v_r(x_0)$ and $v_l'(x_0)=v_r'(x_0)$, this condition is equivalent to say that there exists a parameter $\sigma$ for which the following system has a solution

\begin{equation*}
\begin{cases}
\vspace{0.3cm}
 C_1J_0(2\sqrt{\sigma}x_0)= C_2J_0(2\sqrt{\sigma}(1-x_0))  \\

C_1J_0'(2\sqrt{\sigma}x_0)=-C_2J_0'(2\sqrt{\sigma}(1-x_0)).
\end{cases}
\end{equation*}
The system above has a solution if and only if the parameter $\sigma$ is a root of the following transcendental equation 
\begin{equation}\label{eTES}
J_0(2\sqrt{\sigma}x_0)J_0'(2\sqrt{\sigma}(1-x_0))+J_0(2\sqrt{\sigma}(1-x_0))J_0'(2\sqrt{\sigma}x_0)=0,
\end{equation}
so $\sigma_1(T_{x_0})$ will be the smallest non zero root of the above equation.  

\medskip
Now we want to compute the eigenvalue $\mu_1(T_{x_0})$, we introduce the parameter $\mu$ and we want to find a function $u\in C^1(0,1)$ such that 
\begin{equation}\label{eDEU}
\begin{cases}
\vspace{0.3cm}
   xu''(x)+u'(x)+\mu xu(x)=0 \qquad  x\in \big[0,x_0] \\

(1-x)u''(x)-u'(x)+\mu (1-x)u(x)=0 \qquad x\in \big[x_0,1].
\end{cases}
\end{equation}
We will find the conditions on $\mu$ by using the same arguments as before. For every constant $C_1$ the following function
\begin{equation*}
u_l=C_1J_0(\sqrt{ \mu} x)
\end{equation*} 
is a solution for \eqref{eDEU} in the interval $\big[0,x_0]$ (we can rule out the function $Y_0$ by the same argument). 
Using the change of variable $y=1-x$ is straightforward to check that, for every constant $C_2$, the function  
\begin{equation*}
u_r=C_2J_0(\sqrt{\mu}(1-x))
\end{equation*} 
is a solution for \eqref{eDEU} in the interval $\big[x_0,1]$. We impose the following matching condition $u_l(x_0)=u_r(x_0)$ and $u_l'(x_0)=u_r'(x_0)$, this condition is equivalent to say that there exists a parameter $\mu$ for which the following system has a solution

\begin{equation*}
\begin{cases}
\vspace{0.3cm}
 C_1J_0(\sqrt{\mu}x_0)= C_2J_0(\sqrt{\mu}(1-x_0))  \\

C_1J_0'(\sqrt{\mu}x_0)=-C_2J_0'(\sqrt{\mu}(1-x_0)).
\end{cases}
\end{equation*}
The system above has a solution if and only if the parameter $\mu$ is a root of the following transcendental equation 
\begin{equation}\label{eTEN}
J_0(\sqrt{\mu}x_0)J_0'(\sqrt{\mu}(1-x_0))+J_0(\sqrt{\mu}(1-x_0))J_0'(\sqrt{\mu}x_0)=0,
\end{equation}
so $\mu_1(T_{x_0})$ will be the smallest non zero root of the above equation. 

Now comparing the transcendental equations \eqref{eTES} and \eqref{eTEN} we can conclude that  
\begin{equation*}
\frac{\mu_1(T_{x_0})}{\sigma_1(T_{x_0})}=4.
\end{equation*}
\end{proof}

We are now ready to prove Theorem \ref{tULBG}
\begin{proof}[Proof of Theorem \ref{tULBG}]We start by the lower bound

\noindent{\bf Lower bound.} Let $h^*=6x(1-x)$, it is known (see for instance \cite{T65}) that, for every $h\in \mathcal{L}$, the following inequality holds 
\begin{equation}\label{iUBSG}
\sigma_1(h)\leq \sigma_1(h^*)=12.
\end{equation}
Now we want to prove that, for every $h\in \mathcal{L}$, the following inequality holds  
\begin{equation}\label{iLBMG}
\mu_1(h)\geq \pi^2.
\end{equation}
Suppose by contradiction that there exists $\overline{h}\in \mathcal{L} $ such that 
\begin{equation*}
\mu_1(\overline{h})< \pi^2,
\end{equation*}
by Lemma \ref{lAN} we conclude that, for $\epsilon$ small enough, there exists a thin domain $\Omega_{\epsilon}$ such that:
\begin{equation*}
\mu_1(\Omega_{\epsilon})< \pi^2.
\end{equation*}
We reach a contradiction because we know from Payne inequality (see \cite{PW60}) that for every convex domain $\Omega$ with diameter 1
\begin{equation*}
\mu_1(\Omega)\geq \pi^2.
\end{equation*}
From \eqref{iUBSG} and \eqref{iLBMG} we conclude that, for every $h\in \mathcal{L}$, the following lower bound holds 

\begin{equation*}
\frac{\pi^2}{12}\leq F(h).
\end{equation*}

\noindent{\bf Upper bound.} We start by proving that, for every $h\in \mathcal{L}$, the following inequality holds 
\begin{equation}\label{iUBNG}
\mu_1(h)\leq \mu_1(T_{\frac{1}{2}}).
\end{equation}
Suppose by contradiction that there exists $\overline{h}\in \mathcal{L} $ such that 
\begin{equation}\label{iCUBN}
\mu_1(\overline{h})>\mu_1(T_{\frac{1}{2}}).
\end{equation} 
We introduce the following family of thin domains,  first $\Omega_\epsilon$ defined thanks to this function $\overline{h}$ and then $R_\epsilon$ defined as follows:
\begin{equation*}
R_{\epsilon}=\{(x,y)\in \mathbb{R}^2 \;|\,\, 0\leq x\leq 1, \,\,-\epsilon \frac{1}{2}T_{\frac{1}{2}}\leq y\leq \epsilon \frac{1}{2}T_{\frac{1}{2}} \},
\end{equation*}
this class of domains $R_\epsilon$ can be seen as flattering rhombi. 
By Lemma \ref{lAN} and \eqref{iCUBN} we conclude that, for $\epsilon$ small enough, we have:
\begin{equation*}
\mu_1(\Omega_{\epsilon})> \mu_1(R_{\epsilon}),
\end{equation*}
we reach a contradiction because we know from \cite{BB99}, \cite{Cheng75} that for every thin domain $\Omega_{\epsilon}$ and for every $\epsilon$ small enough
\begin{equation*}
\mu_1(\Omega_{\epsilon})\leq \lim_{\epsilon \to 0} \mu_1(R_{\epsilon})=4 j_{01}^2.
\end{equation*}
Now we prove that, for every $h\in \mathcal{L}$, the following lower bound for $\sigma_1(h)$ holds
\begin{equation}\label{iLBSG}
\sigma_1(h)\geq h(\frac{1}{2})\sigma_1(T_{\frac{1}{2}}).
\end{equation}
Let $v$ be an eigenfunction associated to $\sigma_1(h)$, using the variational characterization for $\sigma_1(h)$ and using the fact that $h$ is concave and positive we conclude that 
\begin{equation*}
\sigma_1(h)=\frac{\int_0^1(v')^2hdx}{\int_0^1v^2dx}\geq h(\frac{1}{2}) \frac{\int_0^1(v')^2T_{\frac{1}{2}}dx}{\int_0^1v^2dx}\geq h(\frac{1}{2}) \sigma_1(T_{\frac{1}{2}}),
\end{equation*} 
where in the last inequality we used the variational characterization for $\sigma_1(T_{\frac{1}{2}})$. From \eqref{iUBNG} and \eqref{iLBSG} we conclude that:
\begin{equation}
F(h)\leq \frac{\mu_1(T_{\frac{1}{2}})}{\sigma_1(T_{\frac{1}{2}})}\frac{\int_0^1hdx}{h(\frac{1}{2})}\leq 4,
\end{equation}
where the last inequality comes from the fact that $h\in \mathcal{L}$ and Lemma \ref{lET}.

\end{proof}

We turn to the proof of Theorem \ref{tULBF}.
Let $\tau\in [0,1]$ be a parameter, in order to prove the upper bound in Theorem \ref{tULBF}, we need to introduce the following family of 
polynomials of degree four:

\begin{equation*}
P_{\tau}(y)=\frac{1}{4}\tau y^4-2y^3+5\tau y^2-4\tau^2 y+\tau^3.
\end{equation*}
In the next Lemma we prove that the polynomials $P_{\tau}$ have always positive roots and we give some explicit estimates on its roots, this estimates will be useful
in the proof of the upper bound for $F(\Omega)$.
\begin{lemma}\label{lPOL}
Let $0<\tau<1$, then the polynomial $P_{\tau}$ has four positive roots. Let $\{y_1(\tau),y_2(\tau),y_3(\tau),y_4(\tau)\}$ be its roots ordered in increasing order, then the following holds: 
\begin{enumerate}[i]
\item if $0<\tau\leq \frac{\sqrt{3}}{2}$, then $y_1(\tau)\in (0,\frac{2}{3}\tau)$, $y_2(\tau)\in (\frac{2}{3}\tau,\tau+\frac{1}{2}\tau^2)$, $y_3(\tau)\in (\tau+\frac{1}{2}\tau^2,2+\sqrt{2})$ and $y_4(\tau)\in (2+\sqrt{2},+\infty)$

\item if $\frac{\sqrt{3}}{2}\leq \tau\leq 0.9 $, then $y_1(\tau)\in (0,\frac{1}{2})$, $ y_2(\tau)\in (\frac{1}{2},\tau+\frac{1}{2}\tau^2)$, $y_3(\tau)\in (\tau+\frac{1}{2}\tau^2,2+\sqrt{2})$ and $y_4(\tau)\in (2+\sqrt{2},+\infty)$,

\item if $0.9\leq \tau < 1 $, then $y_1(\tau)\in (0,2-\sqrt{2})$, $y_2(\tau)\in (2-\sqrt{2},\tau+\frac{1}{2}\tau^2)$, $y_3(\tau)\in (\tau+\frac{1}{2}\tau^2,2+\sqrt{2})$ and  $y_4(\tau)\in (2+\sqrt{2},+\infty)$.
\end{enumerate}
Moreover $P_{\tau}(y)\geq 0$ in $[0,y_1(\tau)]\cup[y_2(\tau),y_3(\tau)]\cup [y_4(\tau),+\infty)$.
\end{lemma}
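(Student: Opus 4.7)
The plan is to produce the four positive roots of $P_\tau$ via the intermediate value theorem applied at carefully chosen test points where the sign of $P_\tau$ can be computed in closed form. Since $P_\tau(0)=\tau^3>0$, the leading coefficient $\tau/4$ is positive, and $P_\tau(y)\to+\infty$ as $y\to+\infty$, it suffices to exhibit three more sign changes on $(0,+\infty)$; the degree of $P_\tau$ then forces these four sign changes to come from its only four roots. Three cases are needed because no single test point between $0$ and $y_1$ works uniformly in $\tau\in(0,1)$: as $\tau\to 0$ the three smaller roots collapse to $0$, while at $\tau=1$ one has $P_1(y)=\tfrac14(y^2-4y+2)^2$, so $y_1,y_2\to 2-\sqrt 2$ and $y_3,y_4\to 2+\sqrt 2$.

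The central calculations are the exact identities
\begin{align*}
P_\tau\!\left(\tfrac{2\tau}{3}\right) &= \tfrac{\tau^3}{81}(4\tau^2-3),\\
P_\tau\!\left(\tau+\tfrac{\tau^2}{2}\right) &= \tfrac{\tau^6}{4}+\tfrac{3\tau^7}{8}+\tfrac{\tau^8}{8}+\tfrac{\tau^9}{64},\\
P_\tau(2+\sqrt 2) &= (\tau-1)\bigl[\tau^2-(7+4\sqrt 2)\tau+(40+28\sqrt 2)\bigr].
\end{align*}
The first is non-positive exactly on $(0,\sqrt 3/2]$, which is case (i). The second is visibly positive on $(0,1)$ (the terms of orders $\tau^3,\tau^4,\tau^5$ cancel in the expansion) and supplies the sign change between $y_2$ and $y_3$ in every case. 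The third reduces $P_\tau(2+\sqrt 2)<0$ to a quadratic-in-$\tau$ being positive; its discriminant $(7+4\sqrt 2)^2-4(40+28\sqrt 2)=-79-56\sqrt 2$ is negative, so that quadratic is strictly positive and the product is strictly negative for every $\tau<1$.

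Cases (ii) and (iii) require only replacing the leftmost test point. For case (ii) I would study the cubic $q(\tau):=64P_\tau(1/2)=64\tau^3-128\tau^2+81\tau-16$; its derivative has roots $(16\pm\sqrt{13})/24$, the larger of which is below $\sqrt 3/2$, so $q$ is monotonically increasing on $[\sqrt 3/2,0.9]$ and a direct evaluation $q(0.9)<0$ finishes the estimate. Case (iii) uses the analogous factorization
\[
P_\tau(2-\sqrt 2)=(\tau-1)\bigl[\tau^2-(7-4\sqrt 2)\tau+(40-28\sqrt 2)\bigr],
\]
whose quadratic factor now has positive discriminant $56\sqrt 2-79$ with larger root $\tau_\star=\tfrac12\bigl((7-4\sqrt 2)+\sqrt{56\sqrt 2-79}\bigr)$; one must verify $\tau_\star<9/10$ so that for $\tau\in[0.9,1)$ the quadratic is positive and $P_\tau(2-\sqrt 2)<0$. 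Squaring and clearing denominators, this reduces to $2\cdot 2440^2<3451^2$, i.e.\ $11\,907\,200<11\,909\,401$, which is correct but by an extremely thin margin.

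Once all sign changes are established, IVT produces one root in each of the four stated open intervals, and these exhaust the roots by degree. The non-negativity statement is then immediate: a quartic with positive leading coefficient and four simple positive roots alternates sign at each root, and since $P_\tau(0)>0$, it is non-negative precisely on $(-\infty,y_1]\cup[y_2,y_3]\cup[y_4,+\infty)$; intersecting with $[0,+\infty)$ gives the claim. I expect the tight arithmetic inequality in case (iii) to be the only real technical obstacle; the remaining verifications reduce to closed-form identities and monotonicity with comfortable margins.
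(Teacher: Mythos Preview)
Your proposal is correct and follows essentially the same approach as the paper: evaluate $P_\tau$ at the test points $0$, $\tfrac{2}{3}\tau$ (resp.\ $\tfrac12$, resp.\ $2-\sqrt2$), $\tau+\tfrac12\tau^2$, and $2+\sqrt2$, read off the sign pattern $+,-,+,-$, and invoke the intermediate value theorem together with the degree. Your exposition is in fact more careful than the paper's in two places: the paper simply asserts $P_\tau(\tfrac12)<0$ on $[\sqrt3/2,0.9]$ and $P_\tau(2-\sqrt2)<0$ on $[0.9,1)$ without justification, whereas you supply the monotonicity argument for $q(\tau)=64P_\tau(\tfrac12)$ and the tight arithmetic verification $2\cdot2440^2<3451^2$ for the quadratic factor at $2-\sqrt2$.
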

\begin{proof}
We start by noticing that for every $0<\tau<1$ we have that $P_{\tau}(0)>0$ and $\lim_{y\to +\infty}P_{\tau}(y)=+\infty$, the idea of the proof will be to find three consecutive points $0<a<b<c<+\infty$ for wich $P_{\tau}(a)<0$, $P_{\tau}(b)>0$ and $P_{\tau}(c)<0$. Before passing to the three different cases, we give some inequalities that are true for every $0<\tau<1$. It is straightforward to check that the following inequalities hold:
\begin{equation}\label{ePT1}
P_{\tau}\big (\tau+\frac{1}{2}\tau^2 \big )=\frac{1}{4}\tau^6\big (1+\frac{3}{2}\tau+\frac{1}{2}\tau^2+\frac{1}{4}\tau^3 \big)>0 \;\;\; \forall \; 0<\tau<1,
\end{equation}
\begin{equation}\label{ePT2}
P_{\tau} (2+\sqrt{2} )=(\tau-1)\big (\tau^2-(7+4\sqrt{2})\tau +40+28\sqrt{2})<0 \;\;\; \forall \; 0<\tau<1.
\end{equation}
We now prove separately the three cases.
\begin{enumerate}[i]
\item If $0<\tau\leq \frac{\sqrt{3}}{2}$, then the following inequality holds 
\begin{equation*}
P_{\tau} (\frac{2}{3}\tau)=\frac{4}{9}\tau^3 \big(\frac{\tau^2}{9}-\frac{1}{12} \big )<0,
\end{equation*}
the result follows from this inequality combined with \eqref{ePT1} and \eqref{ePT2}.
 
\item if $\frac{\sqrt{3}}{2}\leq \tau\leq 0.9 $, then the following inequalities hold 
\begin{align*}
P_{\tau} (\frac{1}{2})&=\tau^3-2\tau^2+\frac{81}{64}\tau-\frac{1}{4}<0,\\
\frac{1}{2}&<\tau+\frac{1}{2}\tau^2,
\end{align*}
the result follows from the inequalities above combined with \eqref{ePT1} and \eqref{ePT2}.

\item If $0.9\leq \tau < 1 $, then the following inequalities hold
\begin{align*}
P_{\tau} (2-\sqrt{2})&=(\tau-1)\big (\tau^2-(7-4\sqrt{2})\tau +40-28\sqrt{2})<0,\\
2-\sqrt{2}&<\tau+\frac{1}{2}\tau^2.
\end{align*}
the result follows from the inequalities above combined with \eqref{ePT1} and \eqref{ePT2}.
\end{enumerate}
\end{proof}
We now state Theorem \ref{tULBF} in a more precise way, in order to give more information about the explicit constant $C_1$.
\begin{theorem}
Let $K$ be the following constant 

\begin{equation*}
K=\max_{\tau\in [0,1]}\frac{2\pi \tau}{y_2(\tau)[2\sqrt{1-\tau^2}+2\tau \arcsin(\tau)]}.
\end{equation*}

Then for every bounded convex open set $\Omega\subset \mathbb{R}^2$, the following inequalities hold
\begin{equation*}
\frac{\pi^2}{6\sqrt[3]{18}}\leq F(\Omega)\leq2(1+K)\leq 9.04.
\end{equation*}
\end{theorem}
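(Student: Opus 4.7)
The plan is to prove the two inequalities separately, with the lower bound following from classical spectral-geometric inequalities and the upper bound from an explicit test-function construction producing the quartic $P_\tau$ of Lemma \ref{lPOL}.

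For the lower bound $F(\Omega)\ge \pi^2/(6\sqrt[3]{18})$, I would start with the Payne--Weinberger estimate $\mu_1(\Omega)\ge \pi^2/D(\Omega)^2$, valid for all convex $\Omega$, which yields $\mu_1|\Omega|\ge \pi^2|\Omega|/D^2$. For the denominator, three complementary upper bounds on $\sigma_1(\Omega)P(\Omega)$ are available: Brock's isoperimetric inequality $\sigma_1\leq \sqrt{\pi/|\Omega|}$, the Cauchy formula $P(\Omega)\leq \pi D(\Omega)$ for convex sets, and Weinstock's $\sigma_1 P\leq 2\pi$. Each of these is sharp in a different regime of the ratio $t:=|\Omega|/D^2$, so one is led to combine them by an AM--GM-type optimization in which the cube root factor $\sqrt[3]{18}$ emerges from a three-variable balance among $|\Omega|$, $D(\Omega)$, and $P(\Omega)$. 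In the thin regime one instead invokes the sharp one-dimensional bound $F(h)\ge \pi^2/12$ of Theorem \ref{tULBG} together with Lemmas \ref{lAS}--\ref{lAN}, and then glues the two regimes together.

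For the upper bound $F(\Omega)\leq 2(1+K)$, the plan is to use a carefully chosen test function in the variational characterization \eqref{eVFN} of $\mu_1$ and a related one in \eqref{eVFS} for $\sigma_1$. After normalizing, $\Omega$ is compared to a one-parameter family of reference convex bodies indexed by $\tau\in[0,1]$ whose boundary contains a cap contribution of perimeter $2\sqrt{1-\tau^2}+2\tau\arcsin(\tau)$ (a chord of length $2\sqrt{1-\tau^2}$ together with an arc of length $2\tau\arcsin(\tau)$). The extremality condition matching the two Rayleigh quotients produces exactly the quartic equation $P_\tau(y)=0$; among its four positive roots, the operating root is $y_2(\tau)$, uniquely identified by the sign analysis of Lemma \ref{lPOL}. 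Optimizing the resulting estimate over $\tau\in[0,1]$ defines $K$, and inserting the explicit lower bounds for $y_2(\tau)$ from Lemma \ref{lPOL} yields the numerical estimate $K\leq 3.52$, hence $2(1+K)\leq 9.04$.

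The main obstacle will be the test-function construction for the upper bound: identifying the right reference family indexed by $\tau$, setting up the Rayleigh quotients so that their extremality condition reproduces exactly $P_\tau$, and verifying that $y_2(\tau)$ (and not $y_1$, $y_3$, or $y_4$) is the correct branch. Here Lemma \ref{lPOL} plays a double role: it singles out the relevant root and provides the explicit intervals for $y_2(\tau)$ needed for the final numerical estimate. By contrast, the lower bound is more routine in spirit but still requires careful interpolation between the thin and thick regimes to produce exactly the constant $\pi^2/(6\sqrt[3]{18})$.
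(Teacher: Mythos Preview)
Your proposal has substantial gaps in both halves, and in the upper bound it misidentifies the origin of the quartic $P_\tau$.

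\textbf{Upper bound.} The polynomial $P_\tau$ does \emph{not} arise from any Rayleigh-quotient matching or test-function construction. It is a purely geometric object: with $\tau=w(\Omega)/D(\Omega)$ and $y=2r(\Omega)/D(\Omega)$, the inequality $P_\tau(y)\ge 0$ is exactly the boundary description of the Blaschke--Santal\'o diagram for the triple $(w,D,r)$ of convex bodies (Hern\'andez Cifre). The spectral input is entirely separate and uses no test functions for $\sigma_1$: one combines the Kuttler--Sigillito lower bound $\sigma_1\ge \mu_1 r/\bigl(2(1+\sqrt{\mu_1}\,D)\bigr)$, the width upper bound $\mu_1\le \pi^2 w^2/|\Omega|^2$, and the elementary $|\Omega|\le rP$ to obtain
\[
F(\Omega)\le 2\Bigl(1+\frac{\pi\, w\, D}{r\, P}\Bigr).
\]
Only then does geometry enter: Kubota's inequality bounds $D/P$ by $\bigl(2\sqrt{1-\tau^2}+2\tau\arcsin\tau\bigr)^{-1}$ (this is the source of that expression, not a ``cap perimeter''), and the $(w,D,r)$ diagram gives $y\ge y_2(\tau)$, where Lemma~\ref{lPOL} is used only to rule out the interval $[0,y_1(\tau)]$ via the trivial bound $y\ge \tfrac{2}{3}\tau$. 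Your plan to generate $P_\tau$ from eigenfunction extremality would not work.

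\textbf{Lower bound.} The ingredients you list (Brock, Weinstock, $P\le \pi D$) are not the ones that produce the constant $\pi^2/(6\sqrt[3]{18})$, and Weinstock in the form $\sigma_1 P\le 2\pi$ is a simply-connected result, not a convex one. The actual argument keeps Payne--Weinberger for $\mu_1$ but splits according to the ratio $P/D$ at a threshold $\delta$: when $P\le \delta D$ one uses the linear test function $x_1$ in \eqref{eVFS} together with $\int_{\partial\Omega}x_1^2\,ds\ge D^3/6$ to get $\sigma_1\le 6|\Omega|/D^3$ and hence $F\ge \pi^2/(6\delta)$; when $P>\delta D$ one uses both coordinate functions and the moment-of-inertia inequality $\int_{\partial\Omega}(x_1^2+x_2^2)\,ds\ge P^3/54$ to get $F\ge \delta^2\pi^2/108$. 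Equating the two bounds forces $\delta=\sqrt[3]{18}$. No thin-domain limit or interpolation with Theorem~\ref{tULBG} is needed.
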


\begin{proof} We start by proving the lower bound

\noindent{\bf Lower bound.}
Let $\delta\in [2,\pi] $, we define the following class of bounded convex domains 
\begin{equation}
\mathcal{C}_{\delta}:=\{\Omega\subset \mathbb{R}^2: \Omega \text{ is convex and }P(\Omega)\leq \delta D(\Omega)\}.
\end{equation}
We recall that the functional $F(\Omega)$ is invariant under translation and rotation, so without loss of generality, we can assume that the origin is the center of mass of the boundary of $\Omega$ and the $x_1$ axis is parallel to (one of) the diameter(s). We know the following inequalities for $\mu_1(\Omega)$ and  $\sigma_1(\Omega)$
\begin{equation*}
\mu_1(\Omega)\geq \frac{\pi^2}{D(\Omega)^2}, \quad \sigma_1(\Omega)\leq \frac{|\Omega|}{\int_{\partial \Omega}x_1^2ds}\leq \frac{6|\Omega|}{D(\Omega)^3}.
\end{equation*}
The inequality for $\mu_1$ is Payne inequality (see \cite{PW60}) and the inequality for $\sigma_1(\Omega)$ is obtained by using the function $u(x_1,x_2)=x_1$ as a test function in \eqref{eVFN} and then using the fact that $\int_{\partial \Omega}x_1^2ds\geq \int_{-\frac{D}{2}}^{\frac{D}{2}}x_1^2dx_1$. Let $\Omega\in \mathcal{C}_{\delta}$, using the inequalities above we obtain 
\begin{equation}\label{eLB1}
F(\Omega)\geq \frac{\pi^2}{6\delta}.
\end{equation}
Now we consider the class of domains $\mathcal{C}_{\delta}^{\mathrm{c}}$, i. e. convex domains such that $P(\Omega)> \delta D(\Omega)$. We start by recalling the following result (see \cite{S60} for a geometric proof or \cite{H85} for a proof based on Fourier series)
\begin{equation}\label{eMI}
\min \Big \{ \frac{\int_{\partial \Omega}(x_1^2+x_2^2)ds}{P(\Omega)^3} : \Omega\subset \mathbb{R}^2 \text{ convex}\Big \}=\frac{1}{54},
\end{equation}  
and the minimum is achieved by the equilateral triangle.  Assuming that the origin is at the  center of mass of the boundary, and using in the variational characterization 
\eqref{eVFS} the coordinates functions $x_1$ and $x_2$ we obtain after summing
\begin{equation}\label{eUBSI}
\sigma_1(\Omega)\leq \frac{2|\Omega|}{\int_{\partial \Omega}(x_1^2+x_2^2)ds}.
\end{equation} 
Now from Payne inequality,  ($\mu_1(\Omega)\geq \pi^2/D^2$), \eqref{eMI} and \eqref{eUBSI} we conclude that for every $\Omega\in \mathcal{C}_{\delta}^{\mathrm{c}}$ the following holds
\begin{equation}\label{eLB2}
F(\Omega)\geq \frac{\delta^2 \pi^2}{108}.
\end{equation}
We notice that the lower bounds in \eqref{eLB1} and \eqref{eLB2} coincide when $\delta=\sqrt[3]{18}$, so we finally obtain:
\begin{equation*}
F(\Omega)\geq \frac{\pi^2}{6\sqrt[3]{18}}
\end{equation*}

\noindent{\bf Upper bound.}
Given a bounded convex set $\Omega\subset \mathbb{R}^2$, we denote by $r(\Omega)$ its inradius and by $w(\Omega)$ its minimal width. We know the 
following estimate from below for $\sigma_1(\Omega)$ (see \cite{KS68})
\begin{equation*}
\sigma_1(\Omega)\geq \frac{\mu_1(\Omega)r(\Omega)}{2(1+\sqrt{\mu_1(\Omega)}D(\Omega))},
\end{equation*}

we also know the following upper bound for $\mu_1(\Omega)$, see \cite{HLL21}:
\begin{equation*}
\mu_1(\Omega)\leq \pi^2 \frac{w(\Omega)^2}{|\Omega|^2},
\end{equation*}
we also use the following geometric inequality (see \cite{B29})
\begin{equation*}
\frac{|\Omega|}{r(\Omega) P(\Omega)}\leq 1.
\end{equation*}
Using the three inequalities above we conclude that 
\begin{equation}\label{eUBF}
F(\Omega)\leq 2\Big (1+\frac{\pi w(\Omega) D(\Omega)}{r(\Omega) P(\Omega)}\Big ).
\end{equation}
We introduce the parameter $\tau= \frac{w(\Omega)}{D(\Omega)}$, we know the following geometric inequality (see \cite{Ku1923}, \cite{SA00})
\begin{equation*}
\frac{D(\Omega)}{P(\Omega)}\leq \frac{1}{2\sqrt{1-\tau^2}+2\tau \arcsin(\tau)}=:g(\tau).
\end{equation*}
Now, in order to obtain an upper bound for the functional $F(\Omega)$, we need an upper bound for the quantity $\frac{w(\Omega)}{r(\Omega)}$ where the quantity $\tau= \frac{w(\Omega)}{D(\Omega)}$ is fixed.

The complete system of inequalities for the triplet $(w(\Omega), D(\Omega), r(\Omega))$ is known, in \cite{H00}we can find the Blaschke$-$Santaló diagram where $x(\Omega)=\tau=\frac{w(\Omega)}{D(\Omega)}$ and $y(\Omega)=\frac{2r(\Omega)}{D(\Omega)}$. Let us fix the the quantity $\tau$, in order to obtain an upper bound for $\frac{w(\Omega)}{r(\Omega)}$ it is enough to obtain a lower bound for $y(\Omega)$. From \cite{H00} we know that the following inequality holds
\begin{equation*}
P_{\tau}(y(\Omega))=\frac{1}{4}\tau y(\Omega)^4-2y(\Omega)^3+5\tau y(\Omega)^2-4\tau^2 y(\Omega)+\tau^3\geq 0.
\end{equation*}
In particular from Lemma \ref{lPOL} we know that $y(\Omega)\in [0,y_1(\tau)]\cup[y_2(\tau),y_3(\tau)]\cup [y_4(\tau),+\infty)$, we now prove that $y(\Omega)\geq  y_2(\tau)$. Suppose by contradiction that $y(\Omega)\in [0,y_1(\tau)]$, from the Blaschke$-$Santaló diagram $(w(\Omega), D(\Omega), r(\Omega))$ we see that $y(\Omega)\geq \frac{2}{3} \tau$, but now from Lemma \ref{lPOL} we know that $y_1(\tau)<\frac{2}{3} \tau$ and this is a contradiction. Note that we can prove in the same way that $y(\Omega) < y_4(\tau)$.

We conclude that $y(\Omega)\geq y_2(\tau)$, so we finally obtain the following upper bound 
\begin{equation*}
\frac{\pi w(\Omega) D(\Omega)}{r(\Omega) P(\Omega)}\leq \frac{2\pi g(\tau)\tau}{y_2(\tau)}=:f(\tau).
\end{equation*}
We introduce the following constant 
\begin{equation*}
K=\max_{\tau\in [0,1]} f(\tau)
\end{equation*}
numerically one can check that $K\leq 3.52$ (see Figure \ref{fFT}), from \eqref{eUBF} we finally conclude that 
 
\begin{equation*}
\frac{\pi^2}{6\sqrt[3]{18}}\leq F(\Omega)\leq2(1+K)\leq 9.04.
\end{equation*}

\begin{center}
\begin{figure}
\includegraphics[width=0.5\textwidth]{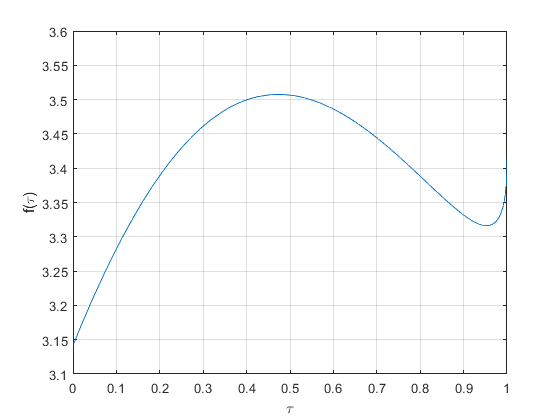}
\caption{Plot of the function $f(\tau)$}\label{fFT}
\end{figure}
\end{center}
\end{proof}

\begin{center}
\begin{figure}
\includegraphics[width=0.5\textwidth]{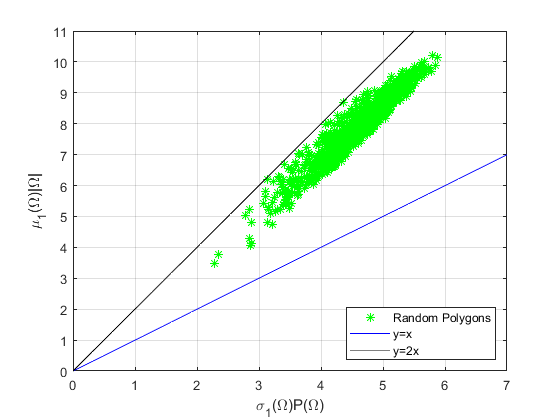}
\caption{Blaschke$-$Santaló diagram with random convex polygons}\label{fRP}
\end{figure}
\end{center}

\section{Blaschke$-$Santaló diagrams and open problems}\label{sBS}
A Blaschke-Santaló diagram is a convenient way to represent in the plane the possible values taken by two quantities (geometric or spectral).
As mentioned in the Introduction, such a diagram has been recently established for quantities like $(\lambda_1(\Omega),\lambda_2(\Omega))$ (the Dirichlet
eigenvalues) in \cite{AH11}, \cite{BBFi}, $(\mu_1(\Omega)\,\mu_2(\Omega))$ (the Neumann eigenvalues) in \cite{AH11},  $(\lambda_1(\Omega),\mu_1(\Omega))$ in
\cite{HF21} or $(\lambda_1(\Omega),T(\Omega))$ (where $T(\Omega)$ is the torsion) in \cite{BBP19}, \cite{LZ19}.

Here we are interested in plotting the set of points $(x,y)$ with
$$\mathcal{E}=\{(x,y) \mbox{ where } x=\sigma_1(\Omega)P(\Omega),\; y=\mu_1(\Omega)|\Omega|,\; \Omega\subset \R^2\}$$ 
$$\mathcal{E}^C=\{(x,y) \mbox{ where } x=\sigma_1(\Omega)P(\Omega),\; y=\mu_1(\Omega)|\Omega|,\; \Omega\subset \R^2, \;\Omega \mbox{ convex}.\}.$$ 

\subsection{The Blaschke-Santaló diagram $\mathcal{E}$}
We start with the diagram $\mathcal{E}$ (no constraint on the sets $\Omega$).
\begin{theorem}
The following equality holds 
\begin{equation*}
\overline{\mathcal{E}}=[0,8\pi]\times[0, \mu_1(\mathbb{D})\pi ]
\end{equation*} 
where $\mu_1(\mathbb{D})={j'}_{11}^2$ is the first Neumann eigenvalue of the unit disk.
\end{theorem}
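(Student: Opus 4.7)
The plan is to prove the equality by double inclusion. The inclusion $\overline{\mathcal{E}}\subseteq [0,8\pi]\times [0,\pi {j'}_{11}^2]$ reduces to two classical sharp upper bounds: $\mu_1(\Omega)|\Omega|\leq \pi {j'}_{11}^2$ (Szeg\H{o}--Weinberger, for which the disk is the maximizer) and $\sigma_1(\Omega)P(\Omega)\leq 8\pi$ (Kokarev's inequality, which extends Weinstock's bound $2\pi$ known to hold on simply connected domains). Both hold on any bounded Lipschitz $\Omega\subset\R^2$ and are standard, so I would just cite them.

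For the reverse inclusion $[0,8\pi]\times [0,\pi {j'}_{11}^2]\subseteq \overline{\mathcal E}$, since we take the closure, it is enough to approximate each target $(x_0,y_0)$ in the rectangle by a sequence $\Omega_n$ with $(\sigma_1(\Omega_n)P(\Omega_n),\,\mu_1(\Omega_n)|\Omega_n|)\to (x_0,y_0)$. The key tool is the Bucur--Nahon decoupling (Theorem~\ref{tBNS}): pick a smooth simply connected domain $\Omega$ with $\mu_1(\Omega)|\Omega|=y_0$ and a smooth simply connected domain $\omega$ with $\sigma_1(\omega)P(\omega)=x_0$; Theorem~\ref{tBNS} then produces a sequence $\Omega_\varepsilon$ of smooth boundary-oscillated perturbations of $\Omega$ along which $P(\Omega_\varepsilon)\sigma_1(\Omega_\varepsilon)\to P(\omega)\sigma_1(\omega)=x_0$ and simultaneously $|\Omega_\varepsilon|\mu_1(\Omega_\varepsilon)\to|\Omega|\mu_1(\Omega)=y_0$. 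Hence the proof is reduced to showing that the one-dimensional projections $\{\sigma_1(\omega)P(\omega)\}_\omega$ and $\{\mu_1(\Omega)|\Omega|\}_\Omega$ are dense in $[0,8\pi]$ and $[0,\pi{j'}_{11}^2]$, respectively.

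For the Neumann projection this is easy: the disk realizes the maximum $\pi {j'}_{11}^2$, dumbbell-type domains with a vanishing channel realize values arbitrarily close to $0$ (exactly the construction recalled in the proof of Proposition~\ref{lIP}), and an intermediate-value argument along a one-parameter family joining a disk to a dumbbell covers everything in between by continuity of $\mu_1|\Omega|$ along the family. For the Steklov projection the disk gives $2\pi$, dumbbells give values tending to $0$, so $[0,2\pi]$ is covered analogously; the interval $(2\pi,8\pi]$ will be reached using the homogenization result of Girouard--Karpukhin--Lagac\'e (Theorem~1.14 in \cite{GKL20}) recalled in Section~\ref{sE}, which gives a sequence realizing the endpoint $8\pi$ as a limit.

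The main obstacle is making this last step rigorous: the GKL construction yields only one sequence converging to the extremal value $8\pi$, so to reach every $x_0\in(2\pi,8\pi)$ I would need a one-parameter family of smooth simply connected domains along which $\sigma_1(\cdot)P(\cdot)$ depends continuously and sweeps the whole interval — for instance, by interpolating between a disk and a member of the GKL sequence by gradually activating the boundary oscillations, and invoking the intermediate value theorem. Once that is set up, combining the two density statements with Theorem~\ref{tBNS} and a diagonal extraction gives, for every $(x_0,y_0)$ in the rectangle, a Lipschitz sequence $(\Omega_n)$ whose image under $(\sigma_1 P,\mu_1|\cdot|)$ converges to $(x_0,y_0)$, which concludes the proof.
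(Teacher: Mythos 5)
Your first inclusion and your treatment of the Neumann coordinate (continuous path from a thin-channel dumbbell to the disk, plus Szeg\H{o}--Weinberger) match the paper. The same is true for targets $(x_0,y_0)$ with $x_0\leq 2\pi$: there you can indeed find a smooth simply connected $\omega$ with $\sigma_1(\omega)P(\omega)=x_0$ and apply Theorem~\ref{tBNS}. The genuine gap is the range $x_0\in(2\pi,8\pi]$, and the fix you sketch does not close it. Theorem~\ref{tBNS} requires $\Omega$ and $\omega$ to be smooth and conformal (in particular $\omega$ simply connected), and by Weinstock's inequality every such $\omega$ satisfies $\sigma_1(\omega)P(\omega)\leq 2\pi$; so no admissible target $\omega$ realizes $x_0>2\pi$, and the decoupling strategy ``find $\omega$ with the right Steklov value, then boundary-oscillate $\Omega$'' simply cannot produce those points. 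The Girouard--Karpukhin--Lagac\'e domains that approach $8\pi$ are obtained by perforation: they are not simply connected (hence not usable as Bucur--Nahon targets), and along that sequence $\mu_1(\Omega_\epsilon)|\Omega_\epsilon|\to 0$, so using them directly destroys the Neumann coordinate. Interpolating ``between a disk and a member of the GKL sequence'' leaves both problems intact.

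The paper resolves this with a different tool, which your proposal is missing: Theorem~1.11 of \cite{GKL20}. One perforates the \emph{same} domain $\Omega_y$ (the one carrying the prescribed Neumann value $y_0$), obtaining subdomains $\Omega_\epsilon\subseteq\Omega_y$ with $\mu_1(\Omega_\epsilon)|\Omega_\epsilon|\to\mu_1(\Omega_y)|\Omega_y|=y_0$ while $\sigma_1(\Omega_\epsilon)P(\Omega_\epsilon)\to\mu_1(\Omega_y,\beta)\int_{\Omega_y}\beta\,dx$ for a prescribed weight $\beta$; the Neumann coordinate is preserved automatically because the perforation happens inside $\Omega_y$, not by matching against an external $\omega$. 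One then chooses weights $\beta_1$ with $\mu_1(\Omega_y,\beta_1)\int\beta_1\leq 8\pi-\delta$ and $\beta_2$ with the weighted quantity close to $\sigma_1(\Omega_y)P(\Omega_y)$, and runs the intermediate value theorem along the segment $\beta_t=t\beta_1+(1-t)\beta_2$ to hit every $z\in[\sigma_1(\Omega_y)P(\Omega_y)+\delta,\,8\pi-\delta]$. That weighted-eigenvalue interpolation is the step your argument needs and does not supply; without it the rectangle $[0,8\pi]\times[0,\pi{j'}_{11}^2]$ is only shown to contain $\overline{\mathcal{E}}$, not to be contained in it beyond $x\leq 2\pi$.
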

\begin{proof}
 We recall the following classical result by Szeg\"o (for the simply connected case) and Weinberger \cite{Szeg54} and \cite{W56}.
\begin{equation*}
\max\{\mu_1(\Omega)|\Omega| \;|\,\, \Omega\subset \mathbb{R}^2\; \text{bounded, open and Lipschitz}\}=\mu_1(\mathbb{D})\pi,
\end{equation*}
from \cite{GKL20} we also know that 
\begin{equation*}
\sup\{\sigma_1(\Omega)P(\Omega) \;|\,\, \Omega\subset \mathbb{R}^2\; \text{bounded, open and Lipschitz}\}=8\pi.
\end{equation*}
From the inequalities above it is clear that $\mathcal{E} \subset  [0,8\pi]\times[0, \mu_1(\mathbb{D})\pi ] $, now we want to prove that $[0,8\pi)\times[0, \mu_1(\mathbb{D})\pi ]\subseteq \overline{\mathcal{E}}$.

We start by proving that for every $y\in [0, \mu_1(\mathbb{D})\pi ]$ there exists a simply connected domain $\Omega_y$ for which $\mu_1(\Omega_y)|\Omega_y|=y$. 
For that purpose, let us consider a dumbbell domain $D_{\epsilon}$, we know that we can choose the width of the channel in order to have 
$\mu_1(D_{\epsilon})|D_{\epsilon}|=\epsilon$ where $\epsilon$ is a small quantity, (see \cite{JM92}).  Now we can gradually enlarge the channel (preserving the 
$\epsilon$-cone condition) until we reach a stadium, then we can modify this stadium continuously until we reach the ball.  In all that process, the eigenvalue $\mu_1$ and 
the area vary continuously. So we constructed a continuous path for the value $\mu_1(\Omega_y)|\Omega_y|$ starting from $\epsilon$ and arriving to $\mu_1(\mathbb{D})\pi$, 
we conclude because $\epsilon$ was arbitrary small. Using the same argument (and \cite{BHM21}) we can prove that for every $x\in [0, 2\pi ]$ there exists a simply 
connected domain $\Omega_x$ for which $\sigma_1(\Omega_x)P(\Omega_x)=x$ ($2\pi$ is the value of $P(\mathbb{D}) \sigma_1(\mathbb{D)}.$).

Let $(x,y)\in [0,8\pi]\times[0, \mu_1(\mathbb{D})\pi]$ we want to prove that there exists a sequence of domains $\Omega_{\epsilon}$ such that 
$\sigma_1(\Omega_{\epsilon})P(\Omega_{\epsilon})\rightarrow x$ and $\mu_1(\Omega_{\epsilon})|\Omega_{\epsilon}|\rightarrow y$. From the discussion above we know 
that there exists a simply connected domain $\Omega_y$ for which $\mu_1(\Omega_y)|\Omega_y|=y$, now we divide the proof in two cases: 

\noindent{\bf Case 1.} Suppose $x> \sigma_1(\Omega_y)P(\Omega_y)$, let $\beta$ be a non negative and non trivial function, we introduce the following weighted Neumann eigenvalue 
\begin{equation*}
\mu_1(\Omega,\beta)=\min \Big \{  \frac{\int_{\Omega}|\nabla u|^2dx}{\int_{\Omega}u^2\beta  dx}:u\in H^1(\Omega),\int_{\Omega}u\beta dx=0 \Big \}.
\end{equation*}
From Theorem $1.11$ in \cite{GKL20} we know that for every domain $\Omega$ and every non negative and non trivial function $\beta \in L^1(\log L)^1$ (this space is a Orlicz space see \cite{GKL20} for the details) there exists a sequence of subdomains $\Omega_{\epsilon}\subseteq \Omega$ such that
\begin{align*}
\sigma_1(\Omega_{\epsilon})P(\Omega_{\epsilon})&\rightarrow \mu_1(\Omega,\beta)\int_{\Omega}\beta dx,\\
\mu_1(\Omega_{\epsilon})|\Omega_{\epsilon}|&\rightarrow \mu_1(\Omega)|\Omega|.
\end{align*}
Let us fix a parameter $\delta$, from \cite{GKL20} we know that there exists a function $\beta_1$ such that $\mu_1(\Omega,\beta_1)\int_{\Omega}\beta_1 dx\leq 8\pi-\delta$, we also know (see \cite{LP15}) that there exists a function $\beta_2$ such that $|\mu_1(\Omega,\beta_2)\int_{\Omega}\beta_2 dx-\sigma_1(\Omega)P(\Omega)|\leq \delta$. Let $0\leq t\leq 1$, we consider the following family of functions $\beta_t=t\beta_1+(1-t)\beta_2$ and we introduce the measures $d\mu_t=\beta_tdx$. It is straightforward to check that the family of measures $d\mu_t$ satisfies the conditions \textbf{M$1$}, \textbf{M$2$} and \textbf{M$3$} in page $26$ of \cite{GKL20}, in particular for every $z\in [\sigma_1(\Omega)P(\Omega)+\delta,8\pi-\delta]$ there exists $t\in[0,1]$ such that $\mu_1(\Omega,\beta_{t})\int_{\Omega}\beta_{t} dx=z$. 

We know that $x\in [\sigma_1(\Omega_y)P(\Omega_y)+\delta,8\pi-\delta]$, let $t_0$ be such that $\mu_1(\Omega_y,\beta_{t_0})\int_{\Omega_y}\beta_{t_0} dx=x$, from the previous results we conclude that there exists a sequence of domains $\Omega_{\epsilon}\subseteq \Omega_y $ such that 
$$\begin{array}{c}
\sigma_1(\Omega_{\epsilon})P(\Omega_{\epsilon}) \rightarrow \mu_1(\Omega_y,\beta_{t_0})\int_{\Omega}\beta_{t_0} dx=x\\
\mu_1(\Omega_{\epsilon})|\Omega_{\epsilon}| \rightarrow \mu_1(\Omega_y)|\Omega_y|=y.
\end{array}$$
The result follows because $\delta$ was arbitrary.

\noindent{\bf Case 2.} Suppose $x\leq \sigma_1(\Omega_y)P(\Omega_y)$, form the fact that $\Omega_y$ is simply connected we know from \cite{Wei54}
that $x\leq 2\pi$. By a previous step we know that there exists a simply connected domain $\omega$ such that $\sigma_1(\omega)P(\omega)=x$, now from Theorem 
\ref{tBNS} (see \cite{BN20} for details) we know that there exists a sequence of smooth open sets $\Omega_{\epsilon}$ such that 
$$\begin{array}{c}
\sigma_1(\Omega_{\epsilon})P(\Omega_{\epsilon}) \rightarrow \sigma_1(\omega)P(\omega)=x\\
\mu_1(\Omega_{\epsilon})|\Omega_{\epsilon}| \rightarrow \mu_1(\Omega_y)|\Omega_y|=y.
\end{array}$$
This concludes the proof.
\end{proof}
We can give the following more precise conjecture:
\begin{conjecture}
Prove that $\mathcal{E}=(0,8\pi)\times (0,\pi \mu_1(\mathbb{D}))\cup \{(0,0)\} \cup \{(2\pi, \pi \mu_1(\mathbb{D}))\}.$
\end{conjecture}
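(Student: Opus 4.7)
The conjectured equality splits into three tasks: realize the corner $(2\pi, \pi\mu_1(\mathbb{D}))$ in $\mathcal{E}$; exclude the rest of the top edge $\{y=\pi\mu_1(\mathbb{D})\}$ and the right edge $\{x=8\pi\}$; and realize every point of the rectangle $[0,8\pi)\times[0,\pi\mu_1(\mathbb{D}))$. The first is immediate from the unit disk, for which $\sigma_1(\mathbb{D})P(\mathbb{D})=2\pi$ and $\mu_1(\mathbb{D})|\mathbb{D}|=\pi\mu_1(\mathbb{D})$. The exclusions follow from known rigidity and strict--inequality statements: by the Szeg\"o--Weinberger equality case, $\mu_1(\Omega)|\Omega|=\pi\mu_1(\mathbb{D})$ forces $\Omega$ to be a disk, which in turn forces $\sigma_1(\Omega)P(\Omega)=2\pi$; and by the analysis in \cite{GKL20} (compare with Kokarev-type upper bounds), one has the strict inequality $\sigma_1(\Omega)P(\Omega)<8\pi$ for every bounded Lipschitz planar $\Omega$, since $8\pi$ is a genuine supremum approached only via homogenization and never attained on an actual bounded set.

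The heart of the argument is the third task: upgrading the density statement $\overline{\mathcal{E}}=[0,8\pi]\times[0,\pi\mu_1(\mathbb{D})]$ from the previous theorem to exact attainment on the open rectangle. The plan is, for a fixed target $(x_0,y_0)\in[0,8\pi)\times[0,\pi\mu_1(\mathbb{D}))$, to build a two--parameter family $\{\Omega_{s,t}\}$ so that the map
\begin{equation*}
\Phi(s,t):=\bigl(\sigma_1(\Omega_{s,t})P(\Omega_{s,t}),\,\mu_1(\Omega_{s,t})|\Omega_{s,t}|\bigr)
\end{equation*}
is continuous and its image covers a neighbourhood of $(x_0,y_0)$. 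Concretely, I would first fix a simply connected base domain $\Omega_{y_0}$ with $\mu_1(\Omega_{y_0})|\Omega_{y_0}|=y_0$, obtained by the dumbbell--to--disk continuous path used in the closure proof. On top of this base one would superimpose two nominally independent deformations: the weighted Girouard--Karpukhin--Lagac\'e homogenization (parameter $t$), which continuously moves the Steklov value along $[0,8\pi)$; and a Bucur--Nahon conformal boundary oscillation (parameter $s$), tuned to correct the Neumann value back to exactly $y_0$. A degree--theoretic or implicit--function argument applied to $\Phi$ would then furnish a concrete domain realizing $(x_0,y_0)$ on the nose.

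\textbf{Main obstacle.} Every existing construction (dumbbells, GKL homogenization, Bucur--Nahon perturbation) produces sequences whose spectral quantities \emph{converge} to the prescribed targets but never reach them exactly --- this is precisely why the preceding theorem yields density only. Passing to exact attainment requires simultaneous fine control of both $\sigma_1 P$ and $\mu_1|\Omega|$ along a single manageable family of deformations, which amounts to establishing a non--degenerate Jacobian of $\Phi$ on the chosen family; equivalently, one must show that the two perturbations act with sufficiently independent effects on the two spectral functionals. Making this independence rigorous appears out of reach of the current technology (in particular, the homogenization limits and the Bucur--Nahon construction are inherently asymptotic and do not come with quantitative control on how the two functionals react to small corrections), which is why the statement remains only a conjecture.
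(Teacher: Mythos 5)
This statement is stated in the paper as a \emph{conjecture}: the authors prove only the weaker closure identity $\overline{\mathcal{E}}=[0,8\pi]\times[0,\pi\mu_1(\mathbb{D})]$ and offer no proof of the exact description of $\mathcal{E}$ itself. So there is no proof in the paper to compare yours against, and you are right not to claim one. Your treatment of the parts that \emph{can} be settled is sound: the corner $(2\pi,\pi\mu_1(\mathbb{D}))$ is realized by the disk; the rest of the top edge is excluded by the rigidity in the Szeg\H{o}--Weinberger inequality (equality forces a disk, hence $x=2\pi$); and the right edge is excluded because $\sigma_1(\Omega)P(\Omega)<8\pi$ strictly (Kokarev's bound, the supremum $8\pi$ being approached only by the homogenization sequences of Girouard--Karpukhin--Lagac\'e). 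You also correctly isolate the genuine obstruction: every construction available (dumbbells, GKL homogenization, Bucur--Nahon oscillations) is asymptotic, so it yields density but not exact attainment, and upgrading to attainment would require a two-parameter family on which the map $\Phi$ has controlled, non-degenerate behaviour --- which is exactly what is missing.

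Two small points you should still address if you pursue this. First, your degree-theoretic plan needs $\Phi$ to be defined on an actual connected parameter space of \emph{domains} (not of limits of sequences); as you note, the GKL and Bucur--Nahon deformations only prescribe the limit values, so $\Phi$ is not even well-defined on them, and this is a structural difficulty rather than a technical one. Second, the conjectured region includes the closed left and bottom edges $\{x=0\}$ and $\{y=0\}$; for a connected bounded Lipschitz domain one has $\sigma_1(\Omega)>0$ and $\mu_1(\Omega)>0$ simultaneously, so points of the form $(0,y)$ with $y>0$ or $(x,0)$ with $x>0$ cannot lie in $\mathcal{E}$. Either the conjecture should be read with open intervals at $0$, or this discrepancy should be flagged; it is worth recording in any serious attempt.
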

The point $\{(0,0)\}$ is attained by any disconnected domain. Moreover the segments $\{0\}\times(0,\pi \mu_1(\mathbb{D}))$ and
$(0,8\pi)\times \{0\}$ cannot be in the set $\mathcal{E}$ because if $\mu_1$ or $\sigma_1$ are zero, it means that the domain is disconnected,
thus $(\sigma_1,\mu_1)=(0,0)$. The segment $(0,8\pi)\times  \{\pi \mu_1(\mathbb{D})\}$ only contains the point corresponding to the disk
because the disk is the only domain providing equality in the Szeg\"o-Weinberger inequality. Finally, the segment 
$\{8\pi\}\times (0,\pi \mu_1(\mathbb{D}))$ is not included in the diagram because the inequality $P(\Omega)\sigma_1(\Omega)<8\pi$
is strict, see \cite{GKL20}.Thus the conjecture means that except these "boundary lines", every point $(x,y)$ such that
$0<x<8\pi$ and $0<y< \pi \mu_1(\mathbb{D})$ should correspond to a set $\Omega$ in the sense that $x=P(\Omega) \sigma_1(\Omega)$
and $y=|\Omega| \mu_1(\Omega)$.
\subsection{The Blaschke-Santaló diagram $\mathcal{E^C}$}
Now we turn to the convex case. To have some idea about the shape of this diagram, we produced random convex polygons in the plane 
and plot the corresponding
quantities $x=\sigma_1(\Omega)P(\Omega),\; y=\mu_1(\Omega)|\Omega|$.

Figure \ref{fRP} shows the values of these quantities for $1000$ random convex polygons. Each of this polygon is constructed by choosing $15$ random points in the plane and then we compute the convex hull of this points. From Figure \ref{fRP} it is natural to conjecture that $1\leq F(\Omega)\leq 2$.

Now we show some experiments that will give us informations about the behaviour of the extremal sets in the class of convex domains. In the  Figure \ref{fRT} we plotted the quantities $\sigma_1(\Omega)P(\Omega)$ and $\mu_1(\Omega)|\Omega|$ for random triangles in the plane.  
\begin{center}
\begin{figure}[H]
\includegraphics[width=0.5\textwidth]{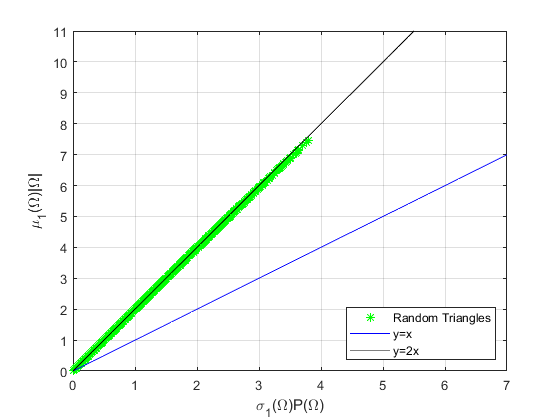}
\caption{Blaschke$-$Santaló diagram with random triangles}\label{fRT}
\end{figure}
\end{center}

From Figure \ref{fRT} we see that for every triangle $T\subset \mathbb{R}^2$ we have that $F(T)$ is slightly less than (and very close to) $2$. Actually a more precise numerical computation shows that it is not true that $F(T)=2$ for every triangles. For example, let $T_1$ be an equilateral triangle of length $1$, we know that $\mu_1(T_1)=\frac{16 \pi^2}{9}$ and let $T_2$ be a right triangle with both cathetus equal to $1$, we know that $\mu_1(T_2)=\pi^2$. A precise numerical computation of the first Steklov eigenvalue for $T_1$ and $T_2$ (using $P2$ finite element methods) gives us the following values $\sigma_1(T_1)\approx 1.2908$ and $\sigma_1(T_2)\approx 0.7310$. Using these values inside the functional $F(\Omega)$ we finally obtain
\begin{align*}
F(T_1)\approx 1.962<2,\\
F(T_2)\approx 1.977<2.
\end{align*}

The value $2$ can be reached asymptotically, let us  consider the following sequence of collapsing  triangles
\begin{equation*}
\Omega_\epsilon=\{(x,y)\in \mathbb{R}^2 \;|\,\, 0\leq x\leq 1, \,\, 0\leq y\leq \epsilon T_{\frac{1}{2}} \},
\end{equation*}
from Theorem \ref{tAF} and Lemma \ref{lET} we conclude that 
\begin{equation*}
F(\Omega_\epsilon)\rightarrow F(T_{\frac{1}{2}})=2.
\end{equation*}
We remark that, from Theorem \ref{tAF} and Lemma \ref{lET}, $F(\Omega_{\epsilon})\rightarrow 2$ for every sequence $\Omega_{\epsilon}$ of collapsing thin domains for which $h=h^++h^-=T_{x_0}$, where $0<x_0<1$.

It remains to characterize the behaviour of the minimizing sequence. We introduce the following family of collapsing rectangles:
\begin{equation*}
C_{\epsilon}=\{ (x,y)\in \mathbb{R}^2 \;|\,\, 0\leq x\leq 1, \,\, 0\leq y\leq \epsilon \}.
\end{equation*}
We plot the values of $\sigma_1(C_{\epsilon})P(C_{\epsilon})$ and $\mu_1(C_{\epsilon})|C_{\epsilon}|$ when $\epsilon$ is approaching zero.

\begin{center}
\begin{figure}[H]
\includegraphics[width=0.5\textwidth]{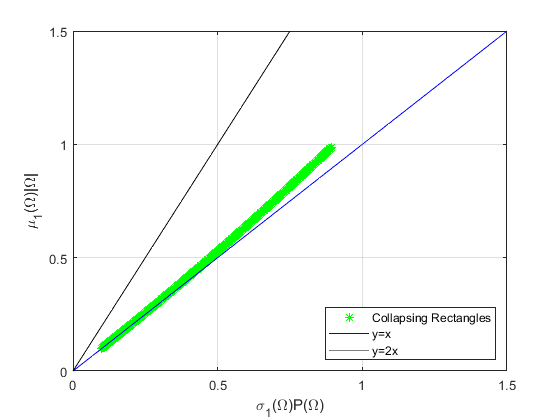}
\caption{Blaschke$-$Santaló diagram with collapsing rectangles}\label{fCT}
\end{figure}
\end{center}

We know from Theorem \ref{tAF} that $F(C_{\epsilon})\rightarrow 1$ but from Figures \ref{fCT} and \ref{fRP} it seems that $F(\Omega)>1$ for every 
$\Omega\subset \mathbb{R}^2$ convex and the only way to approach the value 1 is given by a sequence of collapsing rectangles. 

Supported by these numerical evidences we state the following conjectures:

\begin{conjecture}
For every bounded, convex and open set $\Omega\subset \mathbb{R}^2$ the following bounds hold 
$$
1\leq F(\Omega)\leq 2.
$$
\end{conjecture}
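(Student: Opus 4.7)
I would attack both inequalities through the dichotomy for extremizing sequences of plane convex sets noted at the end of Section \ref{sE}: any such sequence either Hausdorff-converges to a convex body with nonempty interior (in which case continuity of $|\cdot|,\,P(\cdot),\,\mu_1,\,\sigma_1$ produces an interior extremizer to be analyzed by shape derivatives), or it collapses to a segment (in which case Theorem \ref{tAF} reduces the problem to the one-dimensional functional $F(h)$ over concave $h\geq 0$). The numerical evidence in Figures \ref{fRP}, \ref{fRT}, \ref{fCT} and the computation $F(\mathbb{D})=(j'_{11})^2/2\approx 1.69\in (1,2)$ both strongly suggest that the sharp bounds are attained only in the thin-domain limit. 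The heart of the proof is therefore the 1D inequalities $1\leq F(h)\leq 2$; the 2D statement should then follow from a shape-calculus verification that no genuinely two-dimensional convex body is a critical point of $F$ at the value $1$ or $2$.

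\noindent \textbf{1D upper bound.} The existing Theorem \ref{tULBG} gives only $F(h)\leq 4$, by combining $\mu_1(h)\leq \mu_1(T_{1/2})$ with $\sigma_1(h)\geq h(1/2)\sigma_1(T_{1/2})$ and then bounding $\int_0^1 h\leq h(1/2)$. These two estimates cannot be simultaneously sharp: at $h=T_{x_0}$ the concavity bound actually reads $\int_0^1 T_{x_0}/T_{x_0}(1/2)=1/2$, and Lemma \ref{lET} then gives $F(T_{x_0})=2$ exactly. I would close the factor-of-two gap by showing that the supremum of $F$ over normalized concave functions is realized at an extreme point of that convex set, namely some triangle $T_{x_0}$. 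Since $F$ is not concave in $h$, this extremality is not automatic; I would pursue it by extending the second-variation analysis of Step 2 of Theorem \ref{tSM} from $h\equiv 1$ to arbitrary candidate critical points, with the aim of showing that at any non-triangular concave $h$ one can exhibit an admissible direction $\phi$ in which the first variation $\dot F_\phi$ is strictly positive.

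\noindent \textbf{1D lower bound.} Theorem \ref{tSM} already gives local minimality at $h\equiv 1$ with $F(1)=1$. For the sharp global bound I would try the following test-function comparison. Let $u_1$ be the first weighted Neumann eigenfunction, normalized by $\int_0^1 h u_1^2=1$ and $\int_0^1 h u_1=0$. Then $u_1-\int_0^1 u_1$ has zero Lebesgue mean and is admissible in the Steklov Rayleigh quotient, which yields $\sigma_1(h)\bigl(\int_0^1 u_1^2 - (\int_0^1 u_1)^2\bigr)\leq \mu_1(h)$. The desired inequality $\sigma_1(h)\leq \mu_1(h)\int_0^1 h$ then reduces to the weighted Poincar\'e-type estimate
\[
\int_0^1 u_1^2 - \Bigl(\int_0^1 u_1\Bigr)^2 \;\geq\; \frac{1}{\int_0^1 h},
\]
which holds with equality at $h\equiv 1$ (where $u_1=\sqrt{2}\cos(\pi x)$) and which I would try to establish in general by expanding $u_1$ in the Steklov eigenbasis and exploiting that the two problems share the operator $-(h\cdot')'$.

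\noindent \textbf{Main obstacle.} The decisive difficulty is the 1D upper bound: a genuine improvement from $4$ to $2$ must exploit the structural coupling between $\mu_1(h)$ and $\sigma_1(h)$ coming from the shared differential operator, rather than two independent extremal estimates. The shape-calculus route above is concrete but demands a full classification of critical points of a nonconvex variational problem over concave functions---a task that in practice appears of comparable difficulty to the conjecture itself.
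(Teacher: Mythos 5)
This statement is a \emph{conjecture} in the paper, not a theorem: the authors support it only with numerical evidence (random convex polygons, random triangles, collapsing rectangles) and with the observations that $F(C_\epsilon)\to 1$ for collapsing rectangles and $F(\Omega_\epsilon)\to F(T_{x_0})=2$ for thin domains with triangular profile; the best they actually prove is $\pi^2/(6\sqrt[3]{18})\leq F(\Omega)\leq 9.04$ (Theorem \ref{tULBF}) in 2D and $\pi^2/12\leq F(h)\leq 4$ (Theorem \ref{tULBG}) in the thin limit. So there is no proof in the paper to compare yours against, and your proposal --- which you candidly present as a plan --- does not close the gap either. Three steps remain genuinely open. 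First, the 1D upper bound $F(h)\leq 2$: your diagnosis of why Theorem \ref{tULBG} loses a factor of $2$ (the two extremal estimates $\mu_1(h)\leq\mu_1(T_{1/2})$ and $\sigma_1(h)\geq h(1/2)\sigma_1(T_{1/2})$ are never simultaneously sharp) is correct, but the proposed remedy --- classifying all critical points of the nonconvex functional $F$ over concave profiles and showing triangles are the only maximizers --- is not carried out, and as you note is of comparable difficulty to the conjecture. Second, the 1D lower bound: your test-function computation correctly reduces $F(h)\geq 1$ (with $\int_0^1 h=1$) to $\int_0^1(1-h)u_1^2\geq\bigl(\int_0^1 u_1\bigr)^2$, but since a concave unit-mass $h$ can exceed $1$ on part of the interval, the left side is not manifestly nonnegative, and the inequality is left unproven; it is a genuine claim, not a formality.

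Third, and most structurally, the reduction from 2D to 1D is incomplete. The dichotomy from Section \ref{sE} says an extremizing sequence either collapses (handled by Theorem \ref{tAF} and the 1D bounds) or converges to a convex body $\Omega^*$ which is then a genuine extremizer. For that branch you propose to check that ``no two-dimensional convex body is a critical point of $F$ at the value $1$ or $2$,'' but that is not sufficient: a non-degenerate minimizer could a priori be a critical point with $F(\Omega^*)<1$, and excluding critical values $1$ and $2$ says nothing about it. What is needed is a proof that every non-degenerate critical point (or simply every convex body) satisfies $1\leq F\leq 2$ --- which is the conjecture itself. The paper's own data points $F(T_1)\approx 1.962$ and $F(T_2)\approx 1.977$ for genuine triangles show the upper bound is nearly saturated in the interior of the class, so the 2D case cannot be dismissed as a perturbation of the thin limit. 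In short: the proposal identifies the right reductions and the right obstructions, but proves neither inequality; the statement remains open.
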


\begin{conjecture}
The following minimization problem has no solution
$$
\inf\{F(\Omega)\;|\,\, \Omega\subset \mathbb{R}^2 \,\,\text{bounded, convex and open} \}.
$$
In particular every minimizing sequence $\Omega_{\epsilon}$ must be of the form of collapsing rectangles.
\end{conjecture}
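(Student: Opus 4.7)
The plan is to establish two ingredients: the sharp lower bound $F(\Omega) > 1$ for every bounded convex open $\Omega$ (sharpening Theorem \ref{tULBF} and the companion Conjecture 2), and a matching sequence showing $\inf F = 1$. The latter is already provided by Theorem \ref{tAF} applied to $h \equiv 1$: one then has $\mu_1(h) = \sigma_1(h) = \pi^2$ and $\int_0^1 h = 1$, giving $F(\Omega_\epsilon) \to 1$ along collapsing rectangles. Together these imply that $\inf F = 1$ is unattained, hence both (i) no convex minimizer exists and (ii) minimizing sequences converge (after rescaling) to a segment with a specific one-dimensional profile.

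First I would classify the asymptotic behavior of a minimizing sequence $\Omega_n$. After rescaling $D(\Omega_n) = 1$ and translating, Blaschke selection extracts a subsequence converging in Hausdorff distance to a compact convex set $K$. If $K$ has non-empty interior, continuity of area, perimeter, $\mu_1$ and $\sigma_1$ on convex bodies (cited in the Introduction) makes $K$ a genuine minimizer, and a strict inequality $F(K) > 1$ rules this out. If $K$ is a segment, Theorem \ref{tAF} yields $F(\Omega_n) \to F(h)$ for some $h = h^+ + h^- \in \mathcal{L}$ (up to normalization). Proving the one-dimensional sharp bound $F(h) \geq 1$ with equality only at $h \equiv 1$ then forces $h^+ + h^-$ to be constant, and the concavity of both $h^\pm$ combined with this forces each to be affine (since a concave function whose complement is also concave is affine), so $\Omega_n$ is asymptotically a parallelogram with vertical sides and tilt slopes of order $\epsilon$, i.e.\ a collapsing rectangle in the sense of the conjecture.

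For the one-dimensional bound, existence of a minimizer $h_* \in \mathcal{L}$ follows from Lemma \ref{lCL} together with the continuity of $\mu_1(h)$ and $\sigma_1(h)$ given by Lemma \ref{lEES} and its Neumann analogue. To identify $h_* \equiv 1$ I would try two attacks in parallel: first, extend the variational computations of Theorem \ref{tSM} to a general $h_* \in \mathcal{L}$ and show the resulting Euler-Lagrange equation has only the constant as solution in $\mathcal{L}$; second, look for a direct Rayleigh quotient comparison obtained by testing the Steklov quotient against the mean-subtracted Neumann eigenfunction $u - \int_0^1 u$ of $h$, which gives
\[
\sigma_1(h) \leq \mu_1(h)\,\frac{\int_0^1 h\, u^2}{\int_0^1 u^2 - \bigl(\int_0^1 u\bigr)^2},
\]
so that $F(h) \geq 1$ reduces to the signed Poincar\'e-type estimate $\int_0^1 (1-h) u^2 \geq \bigl(\int_0^1 u(1-h)\bigr)^2$. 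For the two-dimensional strict inequality $F(\Omega) > 1$, I would combine a sharpening of the lower bound in Theorem \ref{tULBF} with a shape calculus argument: at a hypothetical convex minimizer $K$, the shape derivatives of $\mu_1(\Omega)|\Omega|$ and $\sigma_1(\Omega) P(\Omega)$ must balance along every convexity-preserving deformation, producing pointwise relations on $\partial K$ between the first Neumann and Steklov eigenfunctions that one expects to be incompatible unless $K$ is degenerate.

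The main obstacle is the global one-dimensional lower bound. The local minimality at $h \equiv 1$ from Theorem \ref{tSM} uses the explicit form of eigenfunctions at the constant and does not directly propagate to an arbitrary $h_* \in \mathcal{L}$; and the Rayleigh residual $\int_0^1 (1-h) u^2 - \bigl(\int_0^1 u(1-h)\bigr)^2$ resists a direct Cauchy-Schwarz treatment because $1-h$ changes sign on $[0,1]$. One may therefore need to exploit the concavity of $h$ and the one-sign-change structure of the Neumann eigenfunction more carefully, or to pass to the measure-theoretic change of variables $t = \int_0^x h(s)\,ds$ reparametrizing the interval by $h$-mass, or to set up a continuous deformation path inside $\mathcal{L}$ joining $h$ to the constant along which $F$ is shown to be monotone.
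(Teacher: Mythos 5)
This statement is one of the paper's \emph{conjectures}: the authors offer no proof, only numerical evidence (Figures \ref{fRP}, \ref{fCT}, \ref{fQR}) and the observation that $F(C_\epsilon)\to 1$ along collapsing rectangles via Theorem \ref{tAF}. So there is no ``paper proof'' to compare against, and your proposal must be judged on its own. Its overall architecture is sound and matches what the authors themselves would presumably attempt: combine the dichotomy for sequences of convex domains (convergence to a body vs.\ collapse to a segment), the limit functional $F(h)$ from Theorem \ref{tAF}, and two sharp lower bounds --- $F(\Omega)>1$ for nondegenerate convex $\Omega$ and $F(h)\geq 1$ on $\mathcal{L}$ with equality only at $h\equiv 1$. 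The reduction ``$h^++h^-$ constant with both summands concave forces each affine, hence the sequence is asymptotically a parallelogram/rectangle'' is correct, and your test-function computation reducing $F(h)\geq 1$ to $\int_0^1(1-h)u^2\geq\bigl(\int_0^1 u(1-h)\bigr)^2$ checks out (using $\int_0^1 uh=0$ to rewrite $\int_0^1 u$).

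However, the two load-bearing inequalities remain genuinely unproven, and you acknowledge as much. The paper only establishes $F(h)\geq \pi^2/12$ (Theorem \ref{tULBG}) and local minimality of $h\equiv 1$ (Theorem \ref{tSM}); the global statement $F(h)\geq 1$ with equality characterization is exactly where the difficulty sits, and neither of your two attacks closes it: the Euler--Lagrange route at a general minimizer $h_*\in\mathcal{L}$ must handle the convexity (concavity) constraint, under which optimality is an inequality rather than an equation and $h_*$ may be degenerate (e.g.\ vanish at the endpoints, where problem \eqref{eES} is not uniformly elliptic); and the Rayleigh residual cannot be handled by Cauchy--Schwarz because $1-h$ has zero mean, hence changes sign. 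Likewise $F(\Omega)>1$ in two dimensions is itself the paper's Conjecture on the lower bound, far beyond the proved constant $\pi^2/(6\sqrt[3]{18})\approx 0.63$, and a shape-derivative contradiction at a hypothetical convex minimizer faces the same constrained-optimality and regularity issues (the minimizer could a priori be a polygon). Two further technical points would also need attention even granting the bounds: Theorem \ref{tAF} is stated for a fixed profile $h$ as $\epsilon\to 0$, so for a general collapsing minimizing sequence whose profiles $h_n$ vary you need a uniformity or diagonal argument on top of Lemma \ref{lCL} and Lemma \ref{lEES}; and the conclusion you reach is ``collapsing parallelograms,'' which agrees with ``collapsing rectangles'' only up to the identification of thin domains with the same $h=h^++h^-$, a point worth making explicit. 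In short: this is a coherent program for the conjecture, not a proof; the core analytic inequalities are open.
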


We now consider only convex quadrilaterals in $\mathbb{R}^2$, in the following numerical experiment we will have in red random convex quadrilaterals and in green collapsing rectangles, starting form a square $\mathbb{S}$ of unit area (corresponding to the farthest green point from the origin) and asymptotically approach the segment. 
\begin{center}
\begin{figure}[H]
\includegraphics[width=0.5\textwidth]{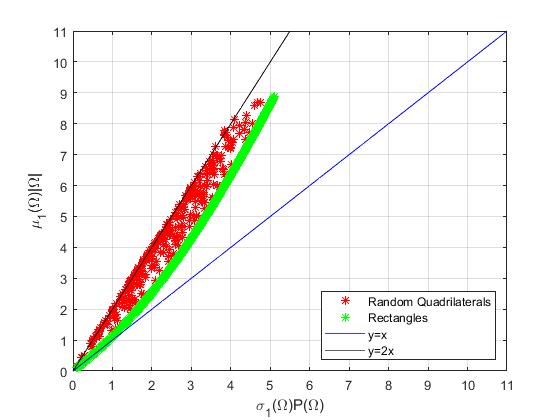}
\caption{Blaschke$-$Santaló diagram with random convex quadrilaterals and collapsing rectangles}\label{fQR}
\end{figure}
\end{center}
From Figure \ref{fQR} it is natural to state the following conjecture.
\begin{conjecture}
For every $0<C\leq 4\sigma_1(\mathbb{S})$ the solution of the minimization problem 
\begin{equation*}
\inf \{\mu_1(\Omega)|\Omega|\;|\,\, \Omega\subset \mathbb{R}^2 \,\, \text{convex quadrilateral s.t.}\,\, \sigma_1(C_{\epsilon})P(C_{\epsilon})=C  \},
\end{equation*}  
is given by a rectangle.
\end{conjecture}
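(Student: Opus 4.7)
The plan is to cast the problem as a constrained optimization on the finite-dimensional space of convex quadrilaterals and combine shape derivatives with a symmetrization argument to reduce to rectangles. First I would reduce the problem to a compact optimization: up to rigid motions, a convex quadrilateral is described by five real parameters (e.g.\ normalized vertex positions), and the quantities $\sigma_1(\Omega)P(\Omega)$ and $\mu_1(\Omega)|\Omega|$ are continuous in this parametrization, using the same continuity results for Steklov and Neumann eigenvalues on plane convex sets that were invoked in Section~\ref{sE}. A minimizing sequence $\Omega_n$ with $\sigma_1(\Omega_n)P(\Omega_n)=C$ could a priori (a) converge in Hausdorff distance to a genuine convex quadrilateral, (b) degenerate to a triangle (two adjacent vertices merging), or (c) collapse to a segment. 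Case (c) is excluded by Theorem~\ref{tAF} combined with the lower bound $F\ge \pi^2/12$, which forces $\sigma_1 P\to 0$ under a thin-quadrilateral collapse and hence contradicts $\sigma_1 P=C$ (for admissible $C$). Case (b) would require showing $F(T)>1$ strictly on every triangle with the correct value of $\sigma_1 P$, together with a local comparison exhibiting a rectangle close to $T$ that does strictly better; Figure~\ref{fRT} and Theorem~\ref{tULBF} together strongly suggest this but still need a proof.

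Second, I would analyse the first-order optimality conditions at a rectangle. Writing a Lagrangian $\Lambda(\Omega)=\mu_1(\Omega)|\Omega|-\lambda\,\sigma_1(\Omega)P(\Omega)$ and differentiating with respect to vertex positions via Hadamard-type formulas for polygonal perturbations, one expresses $d\Lambda$ as a sum of boundary integrals of the Neumann and Steklov eigenfunctions (plus derivatives of $|\Omega|$ and $P$) against the normal velocity at each moving vertex. At any rectangle, the two reflection symmetries combined with the fact that the first Neumann and first Steklov eigenfunctions inherit these symmetries force $d\Lambda$ to vanish in every direction that breaks a symmetry, while the two remaining symmetric directions (changes of the side lengths $a$ and $b$) span the tangent space to the rectangle family. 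The scalar constraint $\sigma_1 P=C$ then cuts out exactly one rectangle (up to similarity), so the rectangle family is a critical locus for every admissible $C$.

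Third, to promote this to a global minimum I would attempt, for any non-rectangular convex quadrilateral $Q$ with $\sigma_1(Q)P(Q)=C$, to produce an explicit deformation $Q_t$ with $Q_0=Q$, $\sigma_1(Q_t)P(Q_t)\equiv C$, $\frac{d}{dt}\mu_1(Q_t)|Q_t|\le 0$, and $Q_t\to R$ for some rectangle $R$. A natural candidate is a two-step Steiner symmetrization: first across the line joining the midpoints of two opposite sides, producing a trapezoid of the same area with non-increasing perimeter; then across the perpendicular bisector of the parallel sides, producing an isosceles trapezoid and, iterated, a rectangle. A compensating reparametrization along the resulting path would restore the constraint $\sigma_1 P=C$ exactly.

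The hardest step will be this last one, because Steiner symmetrization is not monotone for either $\mu_1$ or $\sigma_1$, so a direct variational comparison fails; and no closed-form expression for $\sigma_1$ on trapezoids or rhombi is available. A plausible alternative is to combine a local second-order analysis in the spirit of Theorem~\ref{tSM} (to show that rectangles are strict local minima along the one-parameter rectangle curve inside the diagram $\mathcal{E}^{C}$) with a topological continuation argument along the lower boundary of $\mathcal{E}^{C}$ restricted to quadrilaterals, ruling out the emergence of a competing critical branch. In the absence of explicit Steklov formulas, completing this exclusion appears to require either a new monotonicity inequality for $\sigma_1 P$ under a well-chosen one-parameter deformation interpolating between $Q$ and a rectangle, or a refinement of the thin-domain asymptotics of Section~\ref{sCS} beyond the leading order captured by Theorem~\ref{tAF}.
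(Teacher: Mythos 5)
This statement is a \emph{conjecture} in the paper: the authors offer no proof, only numerical evidence (the diagram with random convex quadrilaterals and collapsing rectangles in Figure~\ref{fQR}), so there is no ``paper proof'' to compare yours against. Your proposal is a research plan rather than a proof, and by your own admission the decisive step is missing. Concretely: Step~2 at best establishes that the rectangle family is a critical locus of the constrained problem --- symmetry forces the shape derivative to vanish in symmetry-breaking directions --- but criticality says nothing about global (or even local) minimality, and you do not carry out the second-order analysis that would be needed even for the local statement. Step~3, which is where the conjecture would actually be proved, rests on a Steiner-symmetrization deformation for which, as you yourself note, neither $\mu_1(\Omega)|\Omega|$ nor $\sigma_1(\Omega)P(\Omega)$ is monotone; without a monotonicity or comparison inequality along the deformation, the argument does not close, and no such inequality is known (the paper has no closed-form or even qualitative control of $\sigma_1$ on trapezoids or rhombi).

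There are also gaps earlier in the plan. In Step~1, the degeneration of a minimizing sequence of quadrilaterals to a triangle (case (b)) is not excluded: the paper only gives \emph{numerical} values $F(T_1)\approx 1.962$ and $F(T_2)\approx 1.977$ for two specific triangles, so the strict inequality you would need on all triangles with the prescribed value of $\sigma_1 P$ is itself unproven. Moreover, even granting existence of a minimizer that is a genuine convex quadrilateral, your continuation/topological argument in the final paragraph presupposes a description of the lower boundary of $\mathcal{E}^C$ restricted to quadrilaterals that is precisely what the conjecture asserts. In short, the proposal correctly identifies the obstacles but does not overcome any of them; the statement remains open, and your write-up should be presented as a strategy discussion, not a proof.
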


\bigskip\noindent

{\bf Acknowledgements}: 
The authors want to thank the reviewer for very good suggestions leading to an improvement of the paper.
The authors are also grateful to B. Bogosel for providing us the values of $\sigma_1(T_1)$ and $\sigma_1(T_2)$ with high numerical accuracy. This work was partially supported by the project ANR-18-CE40-0013 SHAPO financed by the French Agence Nationale de la Recherche (ANR).

%\bibliography{References}{}
%\bibliographystyle{plain}
\bibliographystyle{abbrv}
\bibliography{Ref3}

\end{document}